\DeclareMathOperator*{\argmin}{arg\,min}
\let\NAT@parse\undefined
\theoremstyle{definition}
\newtheorem{theorem}{Theorem}
\newtheorem*{problem}{Problem}
\newtheorem{lemma}{Lemma}
\newtheorem*{defn}{Definition}
\DeclareMathOperator{\seq}{seq}
\DeclareMathOperator{\rank}{rank}
\newcommand{\mT}{\mathcal{T}}
\newcommand{\mD}{\mathcal{D}}
\newcommand{\mU}{\mathcal{U}}
\newcommand{\mC}{\mathcal{C}}
\newcommand{\mJ}{\mathcal{J}}
\newcommand{\Tm}{T_{\min}}
\newcommand{\TM}{T_{\max}}
\newcommand{\um}{u_{\min}}
\newcommand{\uM}{u_{\max}}
\newcommand{\com}{Center of Mass\xspace}
\newcommand{\cop}{Center of Pressure\xspace}
\newcommand{\XY}{(x_1,x_2)}
\newcommand{\bounded}{\mathcal{B}{d}}
\newcommand{\region}[1]{\mathcal{R}_{#1}}
\newcommand{\fT}{\mathscr T}
\newcommand{\alignedintertext}[1]{%
  \noalign{%
    \vskip\belowdisplayshortskip
    \vtop{\hsize=\linewidth#1\par
    \expandafter}%
    \expandafter\prevdepth\the\prevdepth
  }
}
\title{\LARGE \bf
Fast replanning of a lower-limb exoskeleton trajectories for rehabilitation }
\author{Maxime Brunet, Marine Pétriaux, Florent Di Meglio and Nicolas Petit \thanks{M. Brunet and M. Pétriaux are with Wandercraft, 88 Rue de Rivoli, 75004 Paris, France {\tt\small maxime.brunet@wandercraft.eu}}
 \thanks{F. Di Meglio and N. Petit are with MINES Paris, Centre Automatique et Systèmes, PSL University, 60 bd. St Michel, 75272 Paris Cedex, France}
 }
\begin{document}
\maketitle
\thispagestyle{empty}
\pagestyle{empty}

\begin{abstract}
 The paper addresses the rehabilitation of disabled patients using a lower-limb fully-actuated exoskeleton. We propose a novel numerical method to replan the current step without jeopardizing stability. Stability is evaluated in the light of a simple linear time-invariant surrogate model. The method's core is the analysis of an input-constrained optimal control problem with state specified at an unspecified terminal time. A detailed study of the extremals given by Pontryagin Maximum Principle is sufficient to characterize its feasibility. This allows a fast replanning strategy. The efficiency of the numerical algorithm (resolution time below 1\,ms) yields responsiveness to the patient's request. Realistic simulations on a full-body model of the patient-exoskeleton system stress that cases of practical interest for physiotherapists are well-addressed. 
 \end{abstract}
\section{INTRODUCTION}
Exoskeletons have been proposed for various tasks  since the 1970s~\cite{b01,b02}.
In particular, lower-limb exoskeletons are now being developed for gait rehabilitation~\cite{b03}.
Various exoskeleton technologies are being considered such as ground-tethered~\cite{b04,b06,b07}, crutches-aided~\cite{b08, b09, b010, b011}, or self-balancing exoskeletons as in the fully-actuated exoskeleton Atalante by Wandercraft~\cite{b014}. In the latter case, stability during walking is achieved by closed-loop controllers.

Rehabilitation is a task of high medical interest during which the patient cooperates with the actuators of the exoskeleton (also referred to as robot) and provides a substantial part of the mechanical effort. This has a strong therapeutic effect as it allows to train lost body functions.
The degree of effort sharing can be tuned by a physiotherapist, according to the patient capabilities and desired level of training. In a self-balancing exoskeleton, the patient is guided by low and high-level controllers, in charge of stabilization tasks, maintaining the system in the vicinity of pre-defined geometric paths corresponding to nominal walking gaits. Instead of being traveled at their nominal velocity, the duration of each step of the gait can be freely adjusted to reward the patient's efforts. However, this adjustment is not an easy task. Using a simple time scaling to change the velocity at which the step is executed may result in an unstable walking gait. Experimentally, it is observed that the robot often falls if the velocity is kept below some threshold for a sufficient time. 
\begin{figure}[ht]
   \centering
   \includegraphics[scale=0.15]{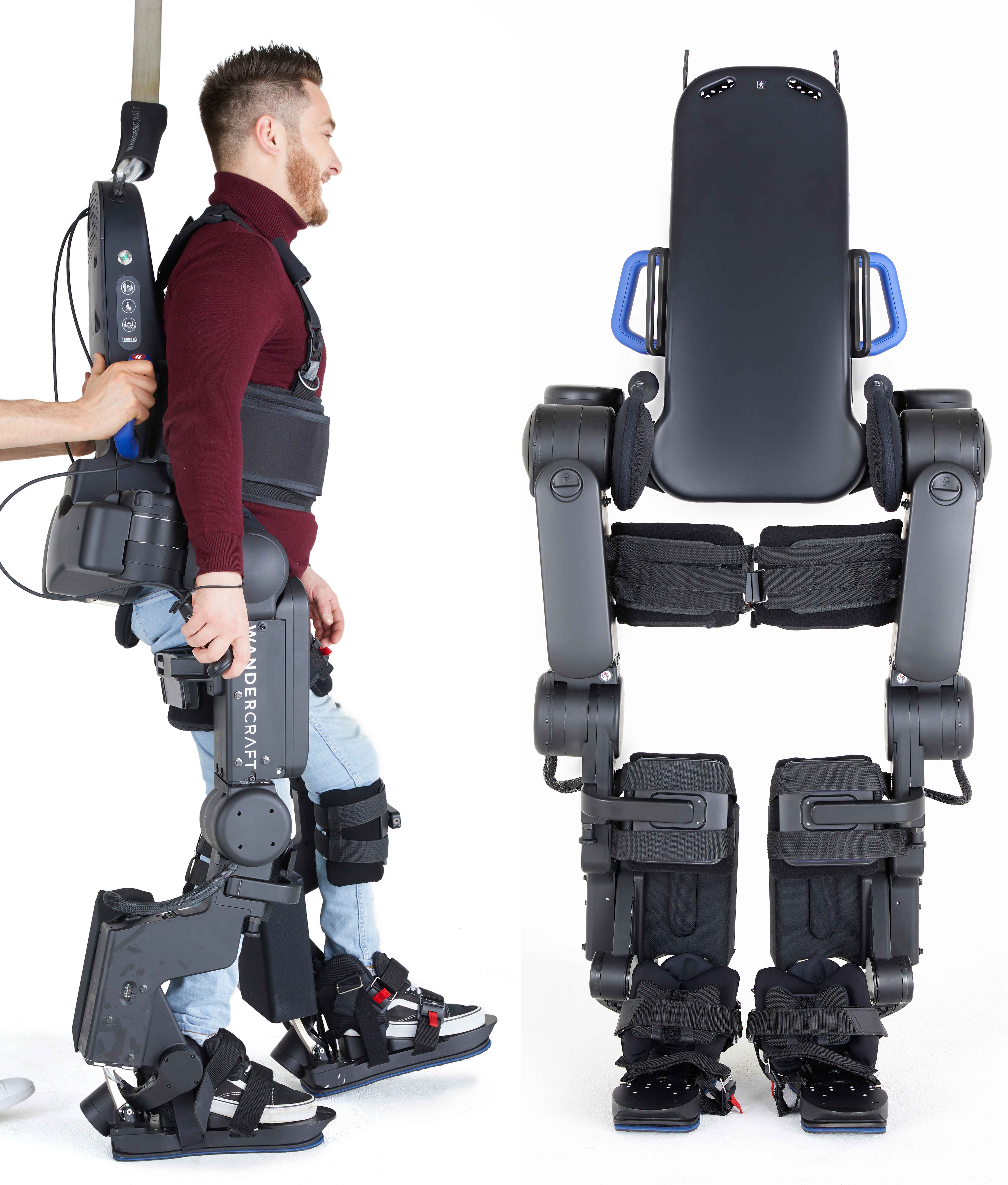}
    \caption{A patient walks using an Atalante exoskeleton under the guidance of a physiotherapist during a rehabilitation exercice.}
   \label{fig:exo}
\end{figure}

At nominal velocity, stability of trajectories is ensured by the carefully tuned low and high-level controllers mentioned above. A point worth noticing is that, for the actuation to have the desired effect on the system, the trajectory's \cop \emph{must} lie inside the \emph{support polygon}. By design, this condition holds at all times when nominal velocity is used. However, when the system trajectory is accelerated or slowed down by too large factors, an inverse dynamics calculus reveals that the \cop leaves the support polygon by large amounts. This is the root cause of the observed instability.
To address this issue, two natural ideas come to mind. The first one is to generate a large  library of trajectories, offline, corresponding to a wide variety of step durations. While doable in principle, it would require particularly extensive efforts to cover all the cases when the step duration request is frequently updated by the patient. Another approach, advocated in this article, is to develop a fast trajectory replanning methodology to be used online. This is a more flexible approach, able to deal with various experimental conditions and patients' morphologies.
The various available methods reported in~\cite{c1,c2,c5,c7} have too heavy computational burdens, which discards them from high-frequency online implementation on-board Atalante.

To ensure rapidity and responsiveness in real-time, we rely on a simple surrogate model of the system. Following e.g.\ \cite{kajita2003}, we approximate the patient-robot dynamics with a Linear Inverted Pendulum (LIP) model. This model involves only two (vector) variables: the \cop and the \com. The model is particularly insightful from a stability perspective. It is usually considered (see e.g.\ \cite{wieber2018}) that as long as there exists a controlled trajectory s.t.:
\emph{i)} the LIP \cop remains inside the support polygon at all times,
\emph{ii)} the LIP terminal condition of the nominal trajectory is reached,
then the robot controllers successfully achieve the trajectory tracking and are able to start a new step after the current one. These conditions guarantee the long-term stability of the walk hence, the patient's safety. 

In this article, a trajectory satisfying these assumptions is said to be \emph{feasible}. Because the robot is essentially behaving like an inverted pendulum subjected to gravity and controlled with inputs that are heavily constrained, the existence of a feasible trajectory depends on the time interval it is defined on. Thus, not all the user's requests (taking the form of a desired duration) can be considered as valid. Existence of feasible trajectories can be assessed by solving a constrained optimal control problem (OCP) of the LIP model. If the time specified by the user yields a feasible solution, then the step can be achieved with this duration. If not, the duration should be adjusted. Mathematically,
the OCP to be resolved is 
an input-constrained OCP for Linear Time-Invariant (LTI) dynamics with state
specified at an unspecified terminal time.

This is a classical problem which can be solved using a non-linear programming (NLP) approach, for instance using direct collocation, e.g. \cite{hargraves-paris-87}. Significant progress has been made toward solving similar problems using either reduced~\cite{c1,c2,c4} or full~\cite{c5, c7} robot models, e.g. Using state-of-the-art numerical solvers, it is possible to solve our problem, having an unspecified terminal time, every 10\,ms with a good level of accuracy on custom embedded hardware. Despite being fast, this level of performance is considered insufficient for the Atalante rehabilitation use-case. The primary cause for criticism is the perceived lack of reactiveness to the patient's efforts. In this paper, we show how to speed up these computations by a factor of 10. We exploit a mathematical property of the constraints to recast the NLP as a cascade of two optimization problems, using a bisection on Quadratic Programs (QP).

The main outcome of the article is a bisection algorithm on QP feasibility functions, which takes as argument the desired duration of the step requested by the patient and outputs the optimal feasible velocity granting safe execution of the step. The CoM trajectory is a readily obtained by-product of the proposed algorithm. It is grounded on a formal result describing the feasibility of the LIP trajectories. An optimization algorithm has already been proposed in~\cite{c8} for solving bi-level problems with quadratic lower levels, but without any guarantee of finding the global optima. Here, in the case of the LIP, we provide the proof that the problem has at most one local optimum additionally to the global one, in theory. We numerically check that there is actually none in our 2D use-case. In addition, we provide a simpler algorithm to find the optimal solution, using bisection, which leverages the 1D nature of our higher-level objective.

The paper is organized as follows. In~\cref{sec:sectionII}, we present the LIP model and the replanning OCP. The nature of the set of feasible trajectories is studied in~\cref{sec:phaseSpaceAnalysis}, and the main~\cref{mainT} is formulated. Using a detailed phase space analysis and Pontryagin Maximum Principle (PMP), we prove~\cref{mainT} in~\cref{sec:proofMainT}.
\cref{mainT} states that the set of feasible terminal times is either an interval of $\mathbb{R}^+$, or the reunion of two intervals of $\mathbb{R}^+$.
This result is instrumental in the design of our numerical resolution method.
In~\cref{sec:sectionV}, we propose a bisection method to solve cases of practical interest for physiotherapists. The numerical algorithm is tested on a scenario of a highly varying user demand taking the form of strongly varying velocity along the gait (with variations over $50\%$). The resolution time is below $1\textrm{ms}$, which stresses the responsiveness to the patient inputs.
Finally, in~\cref{sec:sectionVI}, this methodology is tested on a high-fidelity full-body simulator. Extensive numerical experiments serve to determine the performance of this novel approach and its limitations. They also stress the representativeness of the LIP model. Fall only occurs at extremely high velocities or prolonged periods of near-zero velocity. Thanks to our algorithm, the fall rate drops from $30\%$ (when using naive time scaling) to only $8\%$.
\section{LIP model and feasibility}\label{sec:sectionII}
{The Linear Inverted Pendulum (LIP) reduced model of the patient-robot system is presented below, along with a fixed-time OCP. It enables us to formulate a rehabilitation task as an optimization problem.}
\subsection{The LIP model: a reduced model of the exoskeleton}
The LIP  model is a low-dimensional  control model commonly considered in the robotics community.
The main assumptions necessary for its construction are briefly stated below, and follow~\cite{cWieber}. Consider the robot depicted in~\cref{fig:exo}, seen as a rigid body of mass $m$ on a horizontal ground. Newton's second law writes $m(\ddot{c}+g) = \sum_i f_i$
with $c\in \mathbb{R}^3$ the position of its \com (CoM), $g$ the gravity vector, and $f_i$ the contact forces.
Euler's equation, with respect to the CoM, writes $
   \dot{L} = \sum_i(p_i - c)\times f_i
$ 
with $L$ the angular momentum of the whole robot with respect to its CoM, $p_i$ the point of application of $f_i$, and $\times$ the cross-product of $\mathbb{R}^3$.
Then,
\begin{equation}\label{eq:flatGroundNewtonEuler}
   \frac{mc\times (\ddot{c}+g) + \dot{L}}{m(\ddot{c}^z + g^z)} = p
\end{equation}
with $p \triangleq \frac{\sum p_i^{x,y} f_i^z}{\sum f_i^z}$ the \cop (CoP). Following the admittance paradigm~\cite{stairClimbing}, this variable can be controlled, and we note it $u=p$ from now on to designate it as an input to the controlled dynamics.
Assuming the CoM has no vertical motion ($\ddot{c}^z = 0$), and the angular momentum is constant ($\dot{L} = 0$),
\cref{eq:flatGroundNewtonEuler} simplifies into the \emph{LIP model}
\begin{equation}\label{LIPmodel}
   \ddot{c}^{x,y} = \omega^2(c^{x,y} - u^{x,y})
\end{equation}
with $\omega = \sqrt{\frac{g}{c^z}}$.
As $x$ and $y$ dynamics of the LIP model are \emph{decoupled}, $.^{x,y}$ notations will be omitted for the rest of this paper, and we consider the single dimensional second-order dynamics
\begin{equation}\label{eq:LIPdyn}
   \ddot{c} = \omega^2(c-u)\ \text{ with }c\in\mathbb{R}.
\end{equation}

We perform the following change of coordinates to diagonalize the state equations above with
$x_1 = \xi \triangleq c + \frac{\dot{c}}{\omega},\ x_2 = \xi - 2c = -c + \frac{\dot{c}}{\omega}$, where $\xi$ denotes the Divergent Component of Motion (DCM). Then, \cref{eq:LIPdyn} takes the diagonal form $\dot{x} \triangleq Ax + Bu$ with
\begin{equation}\label{eq:x1x2Dyn}
   \begin{cases}
      \dot{x}_1  = \omega(x_1 - u)\\
      \dot{x}_2  = \omega(-x_2 - u)
   \end{cases}
\end{equation}
The solution of~\cref{eq:x1x2Dyn} with input $u$, from the initial condition $x^0\in\mathbb{R}^2$ is denoted $x^u$.
\subsection{Optimal control problem (OCP) and feasibility criterion}
Below, we propose a feasibility criterion for step durations as the existence of a solution to an OCP.
By definition, the CoP belongs to the convex hull of all the contact points, also called the Support polygon $Sp$.
Therefore, in~\cref{eq:LIPdyn}, $u$ belongs to $Sp$.

In the following$\footnote{In principle, $Sp$ depends on future decision variables and changes as the contact changes.}$, we restrict ourselves to a fixed set $Sp$. Further, it is assumed to be of rectangular form, so that $u\in\mU \triangleq [u_m, u_M]$ with $u_m < u_M$. This allows to cover scenarios of replanning until the end of the current step. By definition, $Sp$ corresponds to the support foot.

To guarantee the long-term stability of the walk, a punctual final constraint is introduced $x(T) = x^f\in\mathbb{R}^2$, with $T>0$ the optimization horizon.

\begin{defn} Consider the set of admissible controls
$
   U_{ad}(T) \triangleq \left\{u\  \text{s.t.}\ \forall t\in[0,T],\ u(t) \in \mU\right\}
$.
A duration $T$ is \emph{feasible} if 
\begin{equation*}
   \Omega(x^0,x^f,T) \triangleq \left\{u \in U_{ad}(T),\ x^u(0) = x^0,\ x^u(T) = x^f\right\}
\end{equation*}
is not empty.
\end{defn}
We denote $\mT(x_0,x_f)$, or $\mT$ for brevity purposes, the set of feasible times $\mT (x_0,x_f) \triangleq \{T> 0,\ \Omega(x^0,x^f,T)\neq\emptyset\}$.
\subsection{Rehabilitation and problem statement}
During rehabilitation, the patient specifies a desired step duration\footnote{The actual process by which the patient specifies this parameter is out-of-the-scope of the paper. We refer the interested reader to~\cite{JolyVG}.} $T^t$.
We propose to solve the following cascaded optimization problems\footnote{The presented quadratic cost function can be easily changed to incorporate extra tuning parameters to
affect performance, without loss of generality.}, which aims at satisfying this request while ensuring safety.
\begin{problem}[Replanning over an unspecified horizon]
   Given $x^0$, $x^f$ and $T^t$, find $T^*$ and $u^*$ as
   \begin{align}
         &T^* = \argmin_{T\in\mT(x^0,x^f)} |T-T^t|\label{pbT}\\ 
         &u^* = \argmin_{u\in\Omega(x^0,x^f,T^*)}\int_0^{T^*} u^2 dt\label{pbu}
   \end{align}
\end{problem}

For a given $T$ s.t.~$\Omega(x^0, x^f, T)\neq \emptyset$, determining $u^*$ in~\cref{pbu} is a fixed horizon input constrained LTI problem, which can be readily solved numerically because it is convex. A more challenging point is the description of the set $\mT$ constraining~\cref{pbT}. It is the subject of the following section where we perform an analysis of the trajectories of~\cref{eq:LIPdyn} in the phase plane to characterize the nature of $\mT$, and derive our main result.
\section{Phase space analysis and main result}\label{sec:phaseSpaceAnalysis}
Below, we study the solutions of minimal and maximal time OCPs. This study stresses the role of several regions in the phase plane being key in the reachability of a target $x^f$ from an initial condition $x^0$.

Then, we state our main result~\cref{mainT}. Its proof is provided in the next section.

\subsection{Definitions}\label{def}
A piecewise constant control input $u$ having $N$ steps over an interval $[0,t_f]$ is defined using a finite (irreducible) partition $0<d_1<d_2+d_1<...<d_n+...+d_1=t_f$ with $u$ taking values only in $\left\{ u_m,u_M \right \}$. For convenience, it is described by its first value and the durations, e.g.\ for 3 steps of respective durations $d_1$, $d_2$, $d_3$ starting with $u_m$, a sequence
$
   (u_m, d_1,d_2,d_3) =\seq\mapsto u
$ 
gives $u(t)=u_m$ for $0\leq t <d_1$, $u(t)=u_M$ for $d_1\leq t <d_2+d_1$, $u(t)=u_m$ for $d_2+d_1\leq t <d_3+d_2+d_1$.
For any initial condition $x^0$, and any $\seq$ defining a control $u$ as detailed above, over $\tau \in [0,t_f]$ we note the solution $x^{\seq}\triangleq x^u$ of the differential equation $\dot x=Ax+Bu$ which is
\begin{equation*}
   \phi(x^0, \seq, \tau) \triangleq x^{\seq}(\tau) = e^{A\tau}x^0 + \int_0^\tau e^{A(\tau - s)}Bu(s)ds
\end{equation*}
By extension, we define
$
   \phi(x^f, \seq, -\tau) \triangleq e^{-A\tau}x^f - \int_0^\tau e^{-A(\tau - s)}Bu(T - s)ds
$.

We define several subsets of $\mathbb{R}^2$ as follows
$\mD \triangleq \{\XY,\ x_2=-x_1\}$, $\mD^+ \triangleq \{\XY,\ x_2 > -x_1\}$, $\mD^- \triangleq \{\XY,\ x_2 < -x_1\}$, and 
$\mU_m^- \triangleq \{\XY,\ x_1<u_m\}$, $\mU_m^+ \triangleq  \{\XY,\ x_1>u_m\}$, 
with the same notations for $\mU_M$.
Finally, we define two open double cones
$\mC_M \triangleq  \{\mD^+\cap\mU_M^-\}\cup\{\mD^-\cap\mU_M^+\}$, $\mC_m \triangleq  \{\mD^-\cap\mU_m^+\}\cup\{\mD^+\cap\mU_m^-\}$.

Zero-order hold of $u$ for a duration $d= t_2 - t_1$ yields the solution
\begin{equation}\label{eq:x1x2Sol}
   x^u(t_2) = \begin{pmatrix}
      e^{\omega d} & 0\\ 0&e^{-\omega d}
   \end{pmatrix}x^u(t_1) + \begin{pmatrix}
      1 - e^{\omega d}\\e^{-\omega d} - 1
   \end{pmatrix}u
\end{equation}
For all vectors variables a subscript $_1$ or $_2$ indicates the first or second coordinate.
\subsection{Preliminary results on optimal trajectories and phase portrait}\label{prelimResults}
\begin{lemma}\label{lem:minMaxSolGlobalAndExistance}
   For all $(x^0,x^f)\in\mathbb{R}^4$ and $T>0$, if there exists a solution $u\in \Omega(x^0,x^f,T)$, then a minimum time solution (noted $\um$) always exists and, when the set $\mT$ is upper-bounded, a maximum time solution (noted $\uM$) exists. They are global optima.
\end{lemma}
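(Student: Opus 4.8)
The plan is to reformulate feasibility in terms of the reachable set $\mathcal R(T)\triangleq\{x^u(T):u\in U_{ad}(T)\}$, for which $\mT=\{T>0:x^f\in\mathcal R(T)\}$, and then to prove that $\mT$ is closed relative to $(0,\infty)$. Granting this, $\Tm\triangleq\inf\mT$ belongs to $\mT$ as soon as $\Tm>0$ (pass to the limit along a minimizing sequence $T_n\downarrow\Tm$ of feasible times), which yields a minimum-time admissible control $\um$; and when $\mT$ is upper bounded, $\TM\triangleq\sup\mT<\infty$ belongs to $\mT$ by the same closedness argument, yielding $\uM$. Since $\Tm$ and $\TM$ are by definition the infimum and supremum of the feasible set, the controls attaining them solve the minimal- and maximal-time problems globally, not merely locally.

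To prove the relative closedness, I would fix $T_n\to T_\star\in(0,\infty)$ and $u_n\in\Omega(x^0,x^f,T_n)$ and extract a limiting control. First, extend each $u_n$ to the common interval $[0,\bar T]$ with $\bar T\triangleq 1+\sup_n T_n<\infty$, say by $u_n\equiv u_m$ on $(T_n,\bar T]$; the extended controls take values in the compact interval $\mU$, hence are bounded in $L^2(0,\bar T)$, and by weak sequential compactness of bounded sets in $L^2$ a subsequence converges weakly in $L^2(0,\bar T)$ to some $u^\star$. Because the set $\{v\in L^2(0,\bar T):v(t)\in\mU\text{ a.e.}\}$ is convex and strongly closed, it is weakly closed, so $u^\star$ takes values in $\mU$ a.e.; in particular $u^\star|_{[0,T_\star]}\in U_{ad}(T_\star)$.

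Next I would pass to the limit in the endpoint constraint, using the factorization $x^{u_n}(T_n)=e^{AT_n}\bigl(x^0+\int_0^{T_n}e^{-As}Bu_n(s)\,ds\bigr)$, which confines the $n$-dependence of the integral to its upper limit. Setting $g_n(s)\triangleq\mathbf 1_{[0,T_n]}(s)\,e^{-As}B$ and $g(s)\triangleq\mathbf 1_{[0,T_\star]}(s)\,e^{-As}B$, one has $g_n\to g$ strongly in $L^2(0,\bar T)$ by dominated convergence, whence $\int g_n u_n\to\int g\,u^\star$ via the identity $\int g_n u_n-\int g\,u^\star=\int(g_n-g)u_n+\int g\,(u_n-u^\star)$ (the first term bounded by $\|g_n-g\|_{L^2}\|u_n\|_{L^2}$, the second vanishing by weak convergence). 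Combined with $e^{AT_n}\to e^{AT_\star}$, this gives $x^{u_n}(T_n)\to x^{u^\star}(T_\star)$; since the left-hand side equals $x^f$ for every $n$, we get $x^{u^\star}(T_\star)=x^f$, and $x^{u^\star}(0)=x^0$ is immediate, so $u^\star|_{[0,T_\star]}\in\Omega(x^0,x^f,T_\star)$ and $T_\star\in\mT$.

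I expect the main obstacle to be precisely this passage to the limit with a \emph{moving} terminal time, where the integrand of $\int_0^{T_n}e^{A(T_n-s)}Bu_n(s)\,ds$ depends on $n$ both through its domain and through the factor $e^{AT_n}$: the exponential factorization is the device that separates a fixed kernel $e^{-As}B$ from the varying domain and reduces the argument to the standard ``weak~$\times$~strong'' convergence lemma, while convexity of $\mU$ is what keeps the weak limit admissible. A minor point still to dispatch is the degenerate case $\inf\mT=0$: taking the limit above at $T_\star=0$ forces $x^0=x^f$, so whenever $x^0\neq x^f$ one has $\inf\mT>0$ because $\mathcal R(T)$ shrinks to $\{x^0\}$ as $T\to 0$, and if $x^0=x^f$ the minimal-time problem is trivial.
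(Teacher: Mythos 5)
Your proof is correct, but it takes a more self-contained route than the paper. The paper disposes of the minimum-time case by citing an existence theorem for time-optimal control of linear systems with compact convex input sets (\cite[Theorem~4.3]{liberzon}) and handles the maximum-time case by extending a maximizing sequence of controls to the common horizon $\overline{T}$ and asserting that the cited theorem's proof carries over. You instead prove directly that $\mT$ is closed relative to $(0,\infty)$: weak sequential compactness of $\mU$-valued controls in $L^2$, weak closedness of the admissible set (convex and strongly closed), and the factorization $x^{u}(T)=e^{AT}\bigl(x^0+\int_0^T e^{-As}Bu\,ds\bigr)$ to separate the moving horizon from a fixed kernel, so that a weak-times-strong convergence argument passes the endpoint constraint to the limit; both $\Tm$ and, when $\mT$ is bounded, $\TM$ then belong to $\mT$, and they are global optima by definition of infimum and supremum. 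What your approach buys: it treats $\um$ and $\uM$ symmetrically by a single closedness argument, it makes explicit the handling of the moving terminal time (which the paper delegates to the cited proof and its sketched extension), and it essentially re-proves the compactness-of-reachable-sets machinery underlying the cited theorem, at the cost of more technical detail. The only residual corner case, which you flag, is $x^0=x^f$ with $\inf\mT=0$ (e.g.\ $x^0$ an equilibrium of~\cref{eq:x1x2Dyn} for a constant admissible $u$), where no minimizer over strictly positive times exists; the paper's statement glosses over this degenerate situation as well, so your treatment is no less complete.
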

\begin{proof}
   \cref{eq:x1x2Dyn} is linear, and $U_{ad}$ is compact and convex, hence, when a solution $u\in \Omega(x^0,x^f,T)$ exists, a minimum time solution $\um$ exists from~\cite[Theorem~4.3]{liberzon}.

   When the set $\mT$ is upper-bounded, we note $\overline{T}$ its supremum. Given a sequence $(T_k, u_k)$ s.t. $\lim_{k\rightarrow \infty} T_k = \overline{T}$, consider the sequence $(\overline{T}, \tilde{u}_k)$ of prolonged $u_k$ on $[T_k, \overline{T}]$ by the null function, then the proof provided in~\cite[Theorem~4.3]{liberzon} is straightforwardly extended to the $\tilde{u}_k$ sequence, yielding the existence of $\uM$. Hence, $\overline{T}$ is maximum.
\end{proof}

We denote \begin{equation}\label{eq:minTime}
 \begin{aligned}(T_{\min}, \um) \triangleq &\argmin T,\\
    u\in \Omega(x^0&,x^f,T>0)
 \end{aligned}\quad
 \begin{aligned}(T_{\max}, \uM) \triangleq &\argmin -T\\
    u\in \Omega(x^0&,x^f,T>0)
 \end{aligned}
\end{equation}
\subsection{Main result}\label{mainResult}
\begin{theorem}[Description of \texorpdfstring{$\mT$}{ T}]\label{mainT}
The set of feasible times $\mT$ is either empty, or of the form $[\Tm,\TM]$, or of the form $[\Tm,+\infty[$, or of the form $[\Tm,A]\cup[B,+\infty[$, $A<B$.
\end{theorem}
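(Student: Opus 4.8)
The plan is to characterize $\mathcal T$ as the solution set of an explicit finite system of inequalities that are polynomial of degree at most two in $\mu:=e^{\omega T}$, obtained by recasting feasibility as a reachability statement and using \cref{lem:minMaxSolGlobalAndExistance} together with the Pontryagin Maximum Principle (PMP) to pin down the reachable set. First I would record the consequences of \cref{lem:minMaxSolGlobalAndExistance}: if $\mathcal T\neq\emptyset$ it contains its infimum $\Tm$, and it is either unbounded or contains its supremum $\TM$; moreover $\mathcal T$ is closed (a limit of feasible times is feasible, by compactness of $U_{ad}$ and truncation/extension of near-optimal controls, as in the proof of \cref{lem:minMaxSolGlobalAndExistance}). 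The statement thus reduces to showing that the relatively open infeasible subset of $[\Tm,\sup\mathcal T]$ is either empty — giving $[\Tm,\TM]$ or $[\Tm,+\infty[$ — or a single interval $]A,B[$, and that in the latter case $\sup\mathcal T=+\infty$.

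Next, fix $T>0$: decoupling and integrating \cref{eq:x1x2Dyn} shows $T$ is feasible iff $x^f$ lies in the reachable set $\mathcal R(T)=\{x^u(T):u\in U_{ad}(T)\}$ (with $x^u(0)=x^0$), a compact convex subset of $\mathbb R^2$. Every boundary point of $\mathcal R(T)$ maximizes some linear functional $x\mapsto\langle\ell,x\rangle$ over $\mathcal R(T)$, so by PMP (adjoint $\dot p=-A^{\top}p$, $p(T)=\ell$) it is the endpoint of a trajectory whose switching function $s\mapsto B^{\top}p(s)=-\omega(e^{-\omega s}p_1(0)+e^{\omega s}p_2(0))$ has at most one zero and is never identically zero; hence the associated control is constant or has a single switch, of the form $(u_m,d,T-d)$ or $(u_M,d,T-d)$. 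Letting $d$ sweep $[0,T]$ and applying the zero-order-hold formula \cref{eq:x1x2Sol}, the boundary $\partial\mathcal R(T)$ is the union of two explicit hyperbola arcs joining $\phi(x^0,(u_m,T),T)$ to $\phi(x^0,(u_M,T),T)$; one checks these two arcs bound a convex region whose boundary is exactly their union, so $\mathcal R(T)$ equals that region. Consequently $T\in\mathcal T$ iff $x^f$ lies on the inner side of both arcs and in the associated ``quadrants'', which, after clearing the factor $\mu=e^{\omega T}$, is a conjunction of two quadratic inequalities $P(\mu)\ge0,\ Q(\mu)\ge0$ and finitely many affine ones. The signs of the leading coefficients of $P,Q$, and of the slopes of the affine factors, are governed exactly by which of $\mathcal D^{\pm}$, $\mathcal U_m^{\pm}$, $\mathcal U_M^{\pm}$ (hence the cones $\mathcal C_m,\mathcal C_M$) contain $x^0$ and $x^f$.

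It remains to run the case analysis on these region memberships. If at least one of $P,Q$ opens strictly downward its solution set is a bounded interval, so $\mathcal T$ is bounded, and a direct check shows the full conjunction collapses to a single interval $[\Tm,\TM]$ (or $\emptyset$): no gap. Otherwise both quadratics open upward (or degenerate to lower degree), which — as the region analysis shows — is precisely the regime $x_1^0\in[u_m,u_M]$, $x_2^f\in[-u_M,-u_m]$ in which, for $T$ large, the two constant-control trajectories bracket $x^f$ and $\mathcal R(T)$ contains it, so $\sup\mathcal T=+\infty$; here one shows the root intervals of the two upward quadratics overlap, so $\{P\ge0\}\cap\{Q\ge0\}$ is the complement of a single bounded interval, whence (after intersecting with the affine half-lines and $\{\mu>1\}$, and using \cref{lem:minMaxSolGlobalAndExistance} to attach the endpoint $\Tm$) $\mathcal T$ is $[\Tm,+\infty[$ or $[\Tm,A]\cup[B,+\infty[$. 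The main obstacle is this overlap claim in the second regime: without it the two quadratics could a priori yield three or four components, and establishing the exact root configuration — for which the cones $\mathcal C_m,\mathcal C_M$ and the line $\mathcal D$ are the natural bookkeeping device — is the heart of the phase-plane analysis on which \cref{mainT} rests.
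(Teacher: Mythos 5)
Your setup is sound and genuinely different in flavor from the paper: you fix $T$, characterize the reachable set $\mathcal{R}(T)$ via PMP (boundary trajectories bang-bang with at most one switch, which matches \cref{lem:minMaxTimeSol}), and then try to read off $\mT$ as the solution set in $\mu=e^{\omega T}$ of two quadratic inequalities plus affine side conditions. But as written the proof has a genuine gap, and you name it yourself: the entire content of \cref{mainT} beyond closedness of $\mT$ is precisely the claim about the root configuration of $P$ and $Q$ --- that in the bounded regime the conjunction $\{P\ge 0\}\cap\{Q\ge 0\}$ (with the affine conditions) collapses to a single interval, and that in the unbounded regime the infeasible set is at most one bounded gap. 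You assert the first as ``a direct check'' and defer the second as ``the main obstacle,'' so neither is established; a priori the intersection of two quadratic solution sets with half-lines can have three components, which is exactly what the theorem excludes. The paper's proof spends all of its effort on these two points: \cref{lem:lemmaConvexBounded} builds a continuous one-parameter family of three-arc controls $(u_m,a,b,c)$, uses the double-cone symmetry of \cref{lem:doubleCone} and the global inversion theorem to show the total-duration map $\fT$ is continuous, and concludes by the intermediate value theorem; \cref{lem:lemmaConvexUnBounded} identifies the exceptional set $\mJ\subset\region{258}\times\region{456}$ responsible for the single gap and bounds from below the duration of any trajectory leaving the region enclosed by the two one-switch paths. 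Your quadratic-discriminant case analysis would have to reprove equivalent facts, and nothing in the proposal indicates how the sign/root bookkeeping would be closed out.

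A secondary, smaller gap: the reduction ``$x^f\in\mathcal{R}(T)$ iff two quadratic inequalities in $\mu$ plus finitely many affine ones'' is plausible but not verified. It requires showing that each one-switch endpoint arc lies on a hyperbola with axis-parallel asymptotes, that the two hyperbolas meet only at the two constant-control endpoints (so the lens they bound is exactly $\mathcal{R}(T)$), and that the branch/quadrant restrictions you wave at (``in the associated quadrants'') are affine in $\mu$ and correctly oriented in every case of the position of $x^0$ relative to $\mD$, $\mU_m^{\pm}$, $\mU_M^{\pm}$. None of this is fatal --- the route could likely be completed and would give a more explicit, algebraic description of $\mT$ than the paper's constructive phase-plane argument --- but in its current state the proposal is a program, not a proof: the steps it leaves out are the ones the theorem is actually about.
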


\cref{mainT} is instrumental for numerically solving~\cref{pbT}. Knowing that $\mT$ is composed of one or two intervals, the solution is simply the projection of  $T^t$ onto them. As detailed in \cref{sec:sectionV}, $\mT$ is composed of a single interval in our practical case, therefore the projection is readily obtained by a bisection method applied to the feasibility function of a quadratic program.
\section{Proof of~\texorpdfstring{\cref{mainT}}{ Theorem 1}}\label{sec:proofMainT}
Below, we first exhibit in~\cref{sec:proofPreliminaries} particular regions of the phase portrait which serve to organize the proof. 
We study the boundedness of~$\mT$ in~\cref{sec:boundedness}, then assess its convexity properties in~\cref{sec:convexBounded} and~\cref{sec:convexUnBounded}.
\subsection{Regions of interest in the phase portrait} \label{sec:proofPreliminaries}
\begin{lemma}\label{lem:minMaxTimeSol}
   The solution $\um$ is bang-bang, i.e.\ takes only values in $\{u_m, u_M\}$,
   with a maximum number of one switch.
   The same property holds for $\uM$ when it exists.
\end{lemma}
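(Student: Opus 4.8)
The plan is to apply the Pontryagin Maximum Principle (PMP) to the free‑terminal‑time minimum‑time problem defining $\um$ in~\cref{eq:minTime}, and to exploit the fact that $A$ in~\cref{eq:x1x2Dyn} is diagonal. First I would write the Hamiltonian $H(x,p,u)=p_0+\langle p,Ax+Bu\rangle$ with $p_0\le 0$ a scalar, $(p_0,p)$ nontrivial. Since $\um$ exists by~\cref{lem:minMaxSolGlobalAndExistance}, PMP yields an adjoint arc $p$ with $\dot p=-A^\top p$, the pointwise condition that $\um(t)$ maximizes $u\mapsto\langle p(t),Bu\rangle$ over $\mU$, and — because the horizon is free and the system is autonomous — $H\equiv 0$ along the extremal.

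The core computation is then immediate: the costate equations decouple into $\dot p_1=-\omega p_1$, $\dot p_2=\omega p_2$, so $p_1(t)=p_1(0)e^{-\omega t}$ and $p_2(t)=p_2(0)e^{\omega t}$, and the switching function is $\sigma(t)\triangleq\langle p(t),B\rangle=-\omega\bigl(p_1(0)e^{-\omega t}+p_2(0)e^{\omega t}\bigr)$, the maximization condition forcing $\um(t)=u_M$ where $\sigma(t)>0$ and $\um(t)=u_m$ where $\sigma(t)<0$. Two facts finish the argument. (i) $\sigma$ cannot vanish on a subinterval: by linear independence of $e^{-\omega t}$ and $e^{\omega t}$ this would give $p_1(0)=p_2(0)=0$, hence $p\equiv 0$, hence (nontriviality) $p_0\neq 0$, contradicting $H\equiv 0$; thus there is no singular arc and $\um$ is bang‑bang, equal a.e.\ to a control with values in $\{u_m,u_M\}$. (ii) $\sigma$ has at most one zero on $[0,\Tm]$: $\sigma(t)=0$ is equivalent to $p_1(0)+p_2(0)e^{2\omega t}=0$, which, since $(p_1(0),p_2(0))\neq(0,0)$, has at most one solution $t$. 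Hence $\um$ switches at most once.

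For $\uM$, when it exists (i.e.\ when $\mT$ is upper‑bounded, cf.~\cref{lem:minMaxSolGlobalAndExistance}), I would repeat the argument verbatim on the maximum‑time problem (minimizing $-T$): the dynamics, the adjoint equation, and therefore the form of $\sigma$ are unchanged, so the same reasoning gives a bang‑bang $\uM$ with at most one switch. I expect the only delicate point to be the bookkeeping of the PMP hypotheses — establishing nontriviality of $(p_0,p)$ via $H\equiv 0$ in order to exclude singular arcs and to ensure the two exponentials in $\sigma$ do not both degenerate — rather than any hard estimate; the zero count of $\sigma$ is trivial once the diagonal form of $A$ is used.
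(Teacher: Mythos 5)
Your proposal is correct and follows essentially the same route as the paper: apply the PMP to the free-terminal-time problem, note that the adjoint dynamics decouple so the switching function is $-\omega\bigl(p_1(0)e^{-\omega t}+p_2(0)e^{\omega t}\bigr)$, and conclude it has at most one zero, with the identical argument repeated for $\uM$. Your explicit exclusion of singular arcs via nontriviality and $H\equiv 0$ is a welcome bit of bookkeeping that the paper leaves implicit, but it does not change the substance of the argument.
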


\begin{proof}
   Consider the Hamiltonian $
      H(t, \lambda^0, \lambda, x, u) = \mu + \lambda(t)(Ax + Bu)
   $.
   Using the PMP, the adjoint equation and solution write
   $
      \dot{\lambda} = -\frac{\partial H}{\partial x} = - \lambda(t)A,\quad
      \lambda(t) = \lambda^0e^{-At} 
  $
  and the switching function is $
      \Gamma(t) = \lambda B = \lambda^0{e^{-At}}B = -\omega\lambda^0\begin{pmatrix}e^{-\omega t} &e^{\omega t}\end{pmatrix}^T
$.
   If ${\lambda^0}_1 {\lambda^0}_2 < 0$, then there exists a unique switching time $\frac{1}{2\omega}\log(-\frac{\lambda^0_1}{\lambda^0_2})$ for which $\Gamma$ changes sign.
   Otherwise, $\Gamma$ has a constant sign. This concludes the proof for $\um$.
   The proof regarding $\uM$ is identical.
   
\end{proof}
\begin{figure}[thpb]
   \centering
   \includegraphics[scale=0.28]{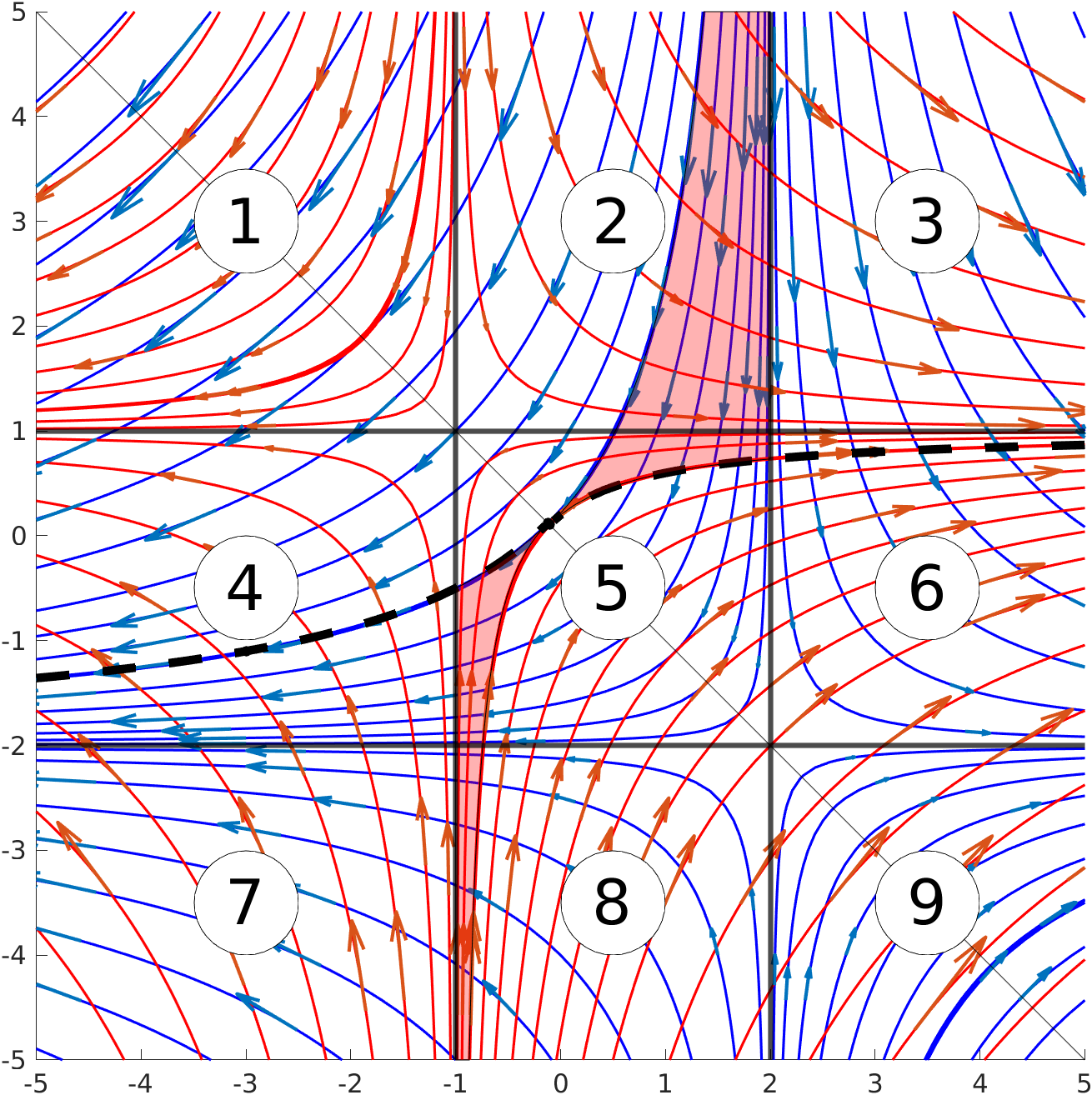}
   \caption{Phase diagram for~\cref{eq:x1x2Dyn} with $u_m=-1$ (red) and $u_M=2$ (blue). $\mC_m$ covers $\region{1}\cap\mD^+$, $\region{5}\cap\mD^-$,  $\region{9}\cap\mD^-$ and  $\region{8}$.  $\mC_M$ covers $\region{1}\cap\mD^+$, $\region{5}\cap\mD^+$,  $\region{9}\cap\mD^-$ and  $\region{2}$. }
   \label{fig:stateSpaceSplit}
\end{figure}
\cref{lem:minMaxTimeSol} highlights the importance of the phase portrait in~\cref{fig:stateSpaceSplit} corresponding to constant control values $u_m$ and $u_M$. It is split into nine open regions, some of them being open semi-infinite strips,  whose boundaries are the trajectories passing through the equilibrium points for $u_m$ and $u_M$.
We denote each region $\region{i},\ i = 1,...,9$. Also, we will note $\region{ijk...} \triangleq \region{i}\cup\region{j}\cup\region{k}\cup...$ for any number of indexes.
Notice two interesting properties: \emph{i)} the locus of intersecting parallel arcs is $\mD$ and \emph{ii)}
the subsets $\region{147}$ and $\region{369}$ are positively invariant under the controlled flow.

Next, the following result states that in the cone $\mC_m$ (resp. $\mC_M$), the flow corresponding to $u_m$ (resp. $u_M)$ reaches a point symmetric to the initial condition with respect to the line $\mD$. This property is instrumental in the proof.
\begin{lemma}\label{lem:doubleCone}
   For all $x$ in the double cones $\mC_m\cup\mC_M$, we have $\phi(x,u,f(x, u)) = Sx$  with $f(x, u) \triangleq \frac{1}{\omega}\log(\frac{u+x_2}{u-x_1})$, $S=\begin{pmatrix}0&-1\\-1&0\end{pmatrix}$, $u = u_m$ if $x\in\mC_m$, and $u = u_M$ otherwise.
\end{lemma}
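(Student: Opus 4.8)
The plan is to reduce the statement to a one-line computation via the zero-order-hold formula~\eqref{eq:x1x2Sol}. Applying it with $t_1=0$, $t_2=d$, $x^u(0)=x$ and the constant input $u$, the desired identity $\phi(x,u,d)=Sx=(-x_2,-x_1)^T$ becomes the pair of scalar equations $e^{\omega d}x_1+(1-e^{\omega d})u=-x_2$ and $e^{-\omega d}x_2+(e^{-\omega d}-1)u=-x_1$. Rearranging the first as $e^{\omega d}(x_1-u)=-(x_2+u)$ gives $e^{\omega d}=\frac{u+x_2}{u-x_1}$, i.e.\ $d=\frac{1}{\omega}\log(\frac{u+x_2}{u-x_1})=f(x,u)$; rearranging the second as $e^{-\omega d}(x_2+u)=u-x_1$ gives $e^{-\omega d}=\frac{u-x_1}{u+x_2}$, the reciprocal of the same quantity. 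Hence both equations hold simultaneously precisely when $d=f(x,u)$, so the lemma reduces to checking that $f(x,u)$ is well-defined (positive argument of the logarithm) and prescribes an admissible, i.e.\ positive, elapsed time.

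The remaining step is a sign discussion carried out separately on each of the four components of $\mC_m\cup\mC_M$. Since these cones are open, $x_1\neq u$ on each of them, so there is no division by zero. For instance, take $x\in\mD^+\cap\mU_m^-$ with $u=u_m$: then $x_1<u_m$ yields $u-x_1>0$, while $x_2>-x_1>-u_m$ yields $u+x_2>0$, so $\frac{u+x_2}{u-x_1}>0$; moreover $\frac{u+x_2}{u-x_1}>1\iff u+x_2>u-x_1\iff x_2>-x_1$, which holds on $\mD^+$, hence $f(x,u)>0$. On the companion component $\mD^-\cap\mU_m^+$ one has $x_1>u_m$ and $x_2<-x_1<-u_m$, so numerator and denominator are both negative, the ratio is still positive, and multiplying $\frac{u+x_2}{u-x_1}>1$ by the negative quantity $u-x_1$ (reversing the inequality) shows it is equivalent to $x_2<-x_1$, the defining inequality of $\mD^-$; again $f(x,u)>0$. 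The two components of $\mC_M$ are treated identically with $u_M$ in place of $u_m$. This establishes $\phi(x,u,f(x,u))=Sx$ with $f(x,u)\in(0,+\infty)$ on all of $\mC_m\cup\mC_M$.

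I do not expect a genuine obstacle: once~\eqref{eq:x1x2Sol} is in hand the proof is a direct calculation followed by an elementary case check. The only delicate point is the bookkeeping in the second paragraph---linking the purely geometric definition of the cones ($\mD^{\pm}$ intersected with $\mU_{\bullet}^{\pm}$) to the analytic condition $\frac{u+x_2}{u-x_1}>1$, and in particular remembering to flip the inequality on the components where $u-x_1<0$. As a consistency check one may verify that $S$ is exactly the orthogonal reflection of $\mathbb{R}^2$ across the line $\mD=\{x_2=-x_1\}$, which matches the geometric picture preceding the statement: the constant-$u_m$ (resp.\ constant-$u_M$) arc issued from a point of $\mC_m$ (resp.\ $\mC_M$) meets $\mD$ and lands on the mirror image of its starting point.
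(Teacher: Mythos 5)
Your proof is correct and follows essentially the same route as the paper's, which simply states that $f$ is well-defined on the cones and that a ``direct calculus'' with \cref{eq:x1x2Sol} gives the result. Your write-up merely makes explicit what the paper leaves implicit: solving the two scalar equations for $e^{\omega d}$ and checking, component by component of $\mC_m\cup\mC_M$, that the argument of the logarithm exceeds $1$ so that $f(x,u)>0$.
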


\begin{proof}
In the double cones $\mC_m$ and $\mC_M$,
$f$ is well-defined as a function of its arguments. A direct calculus with~\cref{eq:x1x2Sol} yields the conclusion.
\end{proof}

\subsection{Boundedness of \texorpdfstring{$\mT$}{T}} \label{sec:boundedness}
Depending on the values of $x^0$ and $x^f$, the set $\mT$ can be empty ($\emptyset$), bounded ($\bounded$), or unbounded ($\infty$). 
\begin{lemma}[Boundedness of \texorpdfstring{$\mT$}{T}]\label{lem:lemmaBoundedness}
   Conditions on $x^0$ and $x^f$ corresponding to cases of non-empty $\mT$ are listed in~\cref{fig:x0xfSplit}.
\end{lemma}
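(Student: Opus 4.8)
The plan is to establish the classification of \cref{fig:x0xfSplit} by a finite case analysis on which of the nine regions (or their closures, or the four separatrix lines $\{x_1=u_m\}$, $\{x_1=u_M\}$, $\{x_2=-u_m\}$, $\{x_2=-u_M\}$ that bound them) contains $x^0$, and likewise for $x^f$. Two structural facts from \cref{sec:proofPreliminaries} make each case elementary: the positive invariance of the open sets $\region{147}=\{x_1<u_m\}$ and $\region{369}=\{x_1>u_M\}$, and the one-switch bang-bang form of the extremal controls (\cref{lem:minMaxTimeSol}) together with the cone identity $\phi(x,u,f(x,u))=Sx$ of \cref{lem:doubleCone}. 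The underlying workhorse is that \cref{eq:x1x2Dyn} is decoupled and affine-monotone in the control: since $x_1^u(T)=e^{\omega T}x_1^0-\omega\int_0^T e^{\omega(T-s)}u(s)\,ds$ and $x_2^u(T)=e^{-\omega T}x_2^0-\omega\int_0^T e^{-\omega(T-s)}u(s)\,ds$ are affine in $u(\cdot)$ with coefficients of constant sign, at a fixed horizon $T$ the set of reachable $x_1$-values is exactly $[x_1^{u_M}(T),x_1^{u_m}(T)]$ and similarly for $x_2$, while the joint reachable set $\mathcal R(x^0,T)$ is a compact convex subset of the product of these two intervals whose boundary is swept by the one-switch arcs.

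First I would dispatch the $\emptyset$ entries. If $x^0\in\region{369}$, every trajectory stays in $\region{369}$ and satisfies $x_1^u(T)\ge u_M+(x_1^0-u_M)e^{\omega T}$, so $\mathcal T=\emptyset$ unless $x^f\in\region{369}$ with $x_1^f>x_1^0$, and symmetrically for $\region{147}$. The remaining emptiness entries follow from a coordinate whose velocity sign is locked inside the relevant region — e.g.\ $\dot x_1=\omega(x_1-u)<0$ throughout $\region{147}$, or $\dot x_2=\omega(-x_2-u)<0$ throughout $\region{2}$ — which forbids the corresponding monotone coordinate of $x^f$ from lying on the wrong side of that of $x^0$; the borderline situations in which $x_2^f$ lies exactly on a separatrix line are decided by comparing it with the monotone limit of the reachable-$x_2$ interval.

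For boundedness I would split on whether $x_1^0\in[u_m,u_M]$. If $x_1^0\notin[u_m,u_M]$, i.e.\ $x^0\in\region{147}\cup\region{369}$, then $x_1^u$ is strictly monotone and stays at distance at least $\operatorname{dist}(x_1^0,[u_m,u_M])\,e^{\omega T}$ from $[u_m,u_M]$ uniformly in $u$, so $x_1^u(T)$ escapes every bounded set at an exponential rate; hence the superset $\{T:\ x_1^u(T)=x_1^f\text{ for some admissible }u\}$ of $\mathcal T$ is bounded, giving $\bounded$. If $x_1^0\in[u_m,u_M]$, I would show $\mathcal T$ is unbounded precisely when it is nonempty and $x_2^f$ lies strictly inside the limiting second-coordinate reachable interval $(-u_M,-u_m)$: the construction is to hold $x_1$ fixed by the admissible constant control $u=x_1^0$ (or to ride the separatrix with $u\in\{u_m,u_M\}$ when $x_1^0$ is an endpoint), dwell arbitrarily long so as to approach the equilibrium $(x_1^0,-x_1^0)$, and then steer to $x^f$ in finite time — possible because, for large horizons, the two integral functionals above are essentially supported on disjoint time-windows ($s\approx0$ for the $x_1$-integral, $s\approx T$ for the $x_2$-integral), so the forward reachable set from $(x_1^0,-x_1^0)$ approaches $\mathbb R\times(-u_M,-u_m)$; varying the dwell duration and invoking continuity then makes every sufficiently large horizon feasible. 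Conversely, when $x_2^f\notin[-u_M,-u_m]$ the reachable-$x_2$ interval $[\,e^{-\omega T}(x_2^0+u_M)-u_M,\ e^{-\omega T}(x_2^0+u_m)-u_m\,]$ eventually separates from $x_2^f$, so $\mathcal T$ is bounded; the cases $x_1^0\in\{u_m,u_M\}$ and $x_2^f\in\{-u_M,-u_m\}$ are settled by the same closed-form expressions, their outcome depending on which side of $-u_m$ (resp. $-u_M$) the value $x_2^0$ lies.

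I expect the main obstacle to be the mixed cases where $x^0$ lies in the central column ($x_1^0\in[u_m,u_M]$) but $x^f$ sits in an invariant outer region or on a separatrix: there one must simultaneously (i) prove $x^f$ is reachable at all, which I would reduce via \cref{lem:minMaxTimeSol} to the geometric check that the one-switch arcs issued from $x^0$ sweep across $x^f$, and (ii) decide whether arbitrarily long horizons survive, which is the limiting-reachable-set claim above. Making the ``dwell near an equilibrium, then leave'' construction fully rigorous — verifying that the dwell and exit legs stay admissible and that their durations can be tuned so that the total horizon attains every sufficiently large value — is the delicate point, and it is exactly what \cref{lem:doubleCone} and \cref{eq:x1x2Sol} are designed to render routine.
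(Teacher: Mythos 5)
Your bounded-case arguments are fine and essentially coincide with the paper's: the coordinatewise formulas $x_1^u(T)=e^{\omega T}x_1^0-\omega\int_0^Te^{\omega(T-s)}u(s)\,ds$ and $x_2^u(T)=e^{-\omega T}x_2^0-\omega\int_0^Te^{-\omega(T-s)}u(s)\,ds$ give exactly the paper's monotone-divergence bound for $x^0\notin\region{258}$ and its $\dot x_2$ bound for $x^0\in\region{2}$, $x^f\in\region{123}$ (your reachable-$x_2$-interval separation is, if anything, a cleaner unified version). The genuine gap is in the unbounded entries of \cref{fig:x0xfSplit}. Your ``dwell at $(x_1^0,-x_1^0)$, then steer'' construction rests on two unproven claims: that the forward reachable set from the interior equilibrium fills $\mathbb{R}\times(-u_M,-u_m)$, and that the fixed target $x^f$ remains \emph{exactly} reachable from the dwelled points $P_\tau$ for arbitrarily large $\tau$. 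The ``disjoint time-windows'' heuristic only controls the two coordinates approximately and does not deliver the exact terminal constraint; covering the whole strip from an interior equilibrium is a real construction (e.g.\ targets with $x_1^f$ slightly above $u_M$ and $x_2^f$ slightly above $-u_M$ need a slide along $x_1=u_M$ and more than one switch), and ``invoking continuity'' is doing all the remaining work. This is precisely what the paper's \cref{lem:doubleCone} is designed to avoid: the symmetric return $\phi(x,u,f(x,u))=Sx$ produces sequences that come back \emph{exactly} to $x^0$ (or to a waypoint $x_w\in\region{5}$), so the already-known finite-time transfer to $x^f$ is reused verbatim and no new reachability statement is needed. Without either that device or a proof of your reachable-set claim, the cases $x^0\in\region{5}$ and $x^0\in\region{2}$ (or $\region{8}$) with $x^f\in\region{456}$ are not established. (Your stronger assertion that \emph{every} sufficiently large horizon is feasible is also more than this lemma claims; that is the business of \cref{lem:lemmaConvexUnBounded} and needs the continuity machinery of \cref{sec:convexBounded}.)

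Separately, your stated dichotomy for the closed middle column is false on the separatrix lines $x_1^0\in\{u_m,u_M\}$, and the parenthetical ``ride the separatrix'' does not repair it. On $\{x_1=u_m\}$ one has $x_1(t)-u_m=\int_0^t\omega e^{\omega(t-s)}(u_m-u(s))\,ds\le 0$, so $x_1(T)=u_m$ forces $u\equiv u_m$; hence for $x^0=(u_m,x_2^0)$, $x^f=(u_m,x_2^f)$ with $x_2^0,x_2^f\in(-u_M,-u_m)$ and $x_2^f$ strictly between $x_2^0$ and $-u_m$, the set $\mT$ is a singleton even though $x_2^f$ lies strictly inside $(-u_M,-u_m)$ and $\mT\neq\emptyset$. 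The underlying reason is that the corner equilibrium $(u_m,-u_m)$ does \emph{not} have the full strip as its reachable set (one can only leave it toward $\region{147}$), so the dwell construction cannot be salvaged there. The paper sidesteps this by implicitly restricting the classification of \cref{fig:x0xfSplit} to the nine open regions of \cref{fig:stateSpaceSplit}; if you insist on covering the separatrices, they require a separate case analysis rather than the same dichotomy.
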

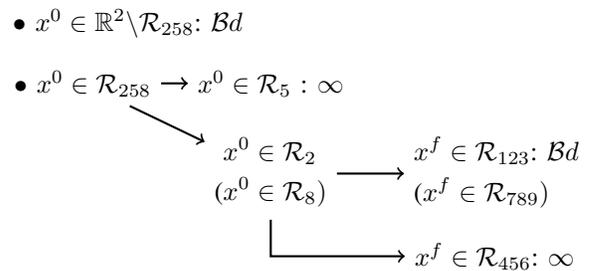
\begin{figure}[thpb]
   \begin{tikzpicture}[node distance={20mm}, thick]
      \node (2) {$\bullet\ x^0 \in \region{258}$}; 
      \node (3) [above of=2, xshift=0.6cm, yshift=-1.2cm] {$\bullet\ x^0 \in \mathbb{R}^2\backslash\region{258}$: $\bounded$};
      \node (5) [right of=2, xshift=0.5cm] {$x^0 \in \region{5}$
      : $\infty$
      }; 
      \node (4) [below of=5, yshift=0.8cm] {$\begin{aligned}&x^0 \in \region{2}\\ \text{(}&x^0 \in \region{8})\end{aligned}$}; 
      \node (8) [right of=4, yshift=0.cm, xshift=1cm] {$\begin{aligned}&x^f \in \region{123}\text{: }\bounded\\ &\text{(}x^f \in \region{789})\end{aligned}$};
      \node (9) [below of=8, yshift=0.9cm, xshift=-0.025cm] {$x^f \in \region{456}\text{: }\infty$}; 
      \draw[->] (2) -- (4);
      \draw[->] (2) -- (5);
      \draw[->] (4) -- (8);
      \draw[->] (4) |- (9);
   \end{tikzpicture} 
   \centering
   \caption{Graph of all possible cases  of non-empty $\mT$.}
   \label{fig:x0xfSplit}
\end{figure}
\begin{proof}
We split the proof according to the location of $x^0$ in the phase plane and, when necessary, the location of  $x^f$. Only cases corresponding to non-empty $\mT$ are considered.


For $x^0 \in \mathbb{R}^2\backslash \{\region{258}\}$, the argument stems from the monotonic divergence of $x_1$. For instance $x^0\in\region{147}$, there exists $\epsilon > 0$, s.t. $x_1^u(0) \leq u_m - \epsilon$. Then, using~\cref{eq:x1x2Sol}, one easily shows that
$\forall t>t_0,\ \dot{x}_1 = \omega(x_1 - u) \leq -\omega\epsilon$.
Therefore, the final time is upper bounded by $\frac{x_1(t_0) - {x^f}_1}{\omega\epsilon}$.
A similar inequality is obtained for $x_1^u(t_0) \geq u_M + \epsilon$ to cover $\region{369}$.
Hence, $\mT$ is upper bounded.


For $x^0\in\region{5}\backslash\mD$, which is entirely covered by $\mC_m\cup\mC_M$, and is stable by symmetry with respect to $\mD$.
\Cref{lem:doubleCone} permits to build a sequence that periodically returns to $x^0$, prolonging infinitely any solution from $x^0$. Hence, $\mT$ is not upper-bounded.

For $x^0\in\region{5}\cap\mD$, for all possible values of $u$, the tangent vector field at $x^0$ is orthogonal to $\mD$ and does not vanish. For any short time the preceding rationale applies.

For $x^0\in\region{2}$ and $x^f \in \region{123}$, one has $\dot{x}_2 < 0$, therefore $x_2$ is decreasing, hence $\dot{x}_2 \leq -\omega({x^f}_2 + u_m)$. Therefore,  $\mT$ is upper-bounded by $ \frac{{x^0}_2 - {x^f}_2}{\omega({x^f}_2 + u_m)}< \infty$.

For $x^0\in\region{2}$ and  $x^f \in \region{456}$,
there exists a sequence from any $x^0$ s.t., for some $t>0$, $x_w \triangleq \phi(x^0,\seq,t)\in \region{5}$. 
In addition, any $x^f$ can be accessed from this waypoint $x_w$ through a sequence $(u_m, a,b)$ or $(u_M, a,b)$, with $a,b>0$.
Therefore, a transient from $x^0$ to $x^f$ passing through $x_w$ can be arbitrarily prolonged with sequences periodically returning to $x_w$.
Hence, $\mT$ is not upper-bounded.

The case $x^0\in\region{8}$ the analysis is similar to $x^0\in\region{2}$.

This completes the proof.
\end{proof}
\subsection{Convexity of bounded \texorpdfstring{$\mT$}{T} cases}\label{sec:convexBounded}
\begin{lemma} \label{lem:lemmaConvexBounded}
When $\mT$ is bounded, $\mT = [\Tm, \TM]$.
\end{lemma}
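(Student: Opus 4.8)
The plan is to prove that a bounded $\mT$ is an interval by establishing a "time-padding" or "continuity" property: if $T_1 \in \mT$ and $T_2 \in \mT$ with $T_1 < T_2$, then every $T \in [T_1, T_2]$ is feasible. Combined with \cref{lem:minMaxSolGlobalAndExistance}, which guarantees that a minimum-time solution $\um$ (realizing $\Tm$) and a maximum-time solution $\uM$ (realizing $\TM$) both exist when $\mT$ is bounded and non-empty, this immediately yields $\mT = [\Tm, \TM]$.

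First I would fix the intermediate target time $T \in (\Tm, \TM)$ and exploit the structure exposed in the boundedness proof (\cref{lem:lemmaBoundedness}): in all bounded cases, $x^0 \in \region{258}$ and the relevant coordinate ($x_1$ on $\region{147}$, $x_1$ on $\region{369}$, or $x_2$ on $\region{2}$, $\region{8}$ with $x^f$ in the appropriate strip) is strictly monotone along every admissible trajectory. The key idea is a homotopy argument on trajectories: consider the one-parameter family obtained by interpolating between the minimum-time trajectory $x^{\um}$ and the maximum-time trajectory $x^{\uM}$. Concretely, I would use the bang-bang description from \cref{lem:minMaxTimeSol} — $\um$ and $\uM$ each have at most one switch, so each is encoded by a sequence $(\cdot, d_1, d_2)$ — and argue that as one continuously deforms the switching times (and, if necessary, inserts a short constant-control segment of variable duration that keeps the endpoint on its reachable arc), the total transit time varies continuously and surjects onto $[\Tm, \TM]$, while the endpoint constraint $x^u(0) = x^0$, $x^u(T) = x^f$ is preserved. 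The monotonicity of the distinguished coordinate is what ensures that intermediate times correspond to genuine admissible controls rather than the trajectory "overshooting" $x^f$.

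An alternative and perhaps cleaner route is a direct reachability/convexity argument: define, for each $T>0$, the reachable set $\mathcal{R}(T) = \{\phi(x^0, u, T) : u \in U_{ad}(T)\}$, which by linearity of \cref{eq:x1x2Dyn} and convexity–compactness of $U_{ad}(T)$ is a convex compact set depending continuously on $T$. Feasibility of $T$ is exactly $x^f \in \mathcal{R}(T)$. Then I would show that on the bounded-$\mT$ configurations the family $T \mapsto \mathcal{R}(T)$ "sweeps monotonically" past $x^f$: using the decoupled diagonal form \cref{eq:x1x2Sol}, the extreme reachable values of the monotone coordinate are themselves monotone in $T$, so the set of $T$ with $x^f$ between these extremes is an interval. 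The endpoints of that interval are attained (closedness of $\mathcal{R}(T)$ plus \cref{lem:minMaxSolGlobalAndExistance}), giving the closed interval $[\Tm,\TM]$.

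The main obstacle I anticipate is handling all the sub-cases of \cref{fig:x0xfSplit} uniformly: the monotone coordinate and the geometry of how $\mathcal{R}(T)$ approaches $x^f$ differ between $x^0 \in \region{147}$, $x^0 \in \region{369}$, and $x^0 \in \region{2} \cup \region{8}$ (with its constraint on where $x^f$ lies), so the homotopy may have to be built case by case, and one must check that deforming switching times never forces a control out of $\mathcal{U}$ or makes a segment duration negative. A secondary subtlety is the degenerate situation $\Tm = \TM$ (a single feasible instant), which the argument must accommodate trivially. I expect the verification that the interpolating controls remain admissible — i.e.\ that the relevant arcs in the phase plane genuinely connect $x^0$ to $x^f$ for every intermediate duration — to be where most of the real work lies, with everything else being continuity and compactness bookkeeping.
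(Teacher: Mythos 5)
Your first route is indeed the same strategy as the paper's proof (a one-parameter family of three-arc bang-bang controls $\seq_v(a,b,c)=(u_m,a,b,c)$ interpolating between $\um$ and $\uM$, continuity of the total duration, and the intermediate value theorem), but as written it is missing precisely the two steps where the work lies. First, the existence of admissible $b,c\geq 0$ for every intermediate $a$ is not a consequence of monotonicity of a distinguished coordinate; the paper establishes it through a transversality inequality between the two vector fields $Ax+Bu_m$ and $Ax+Bu_M$ on $\mD^-$, combined with the closed curve bounded by the $\seq_m$ and $\seq_M$ trajectories and the positive invariance of $\region{147}\cap\mD^-$, and the continuity of $a\mapsto (b,c)$ --- hence of $\fT(a)=a+b+c$ --- is obtained via a global inversion argument resting on a rank-2 computation, not mere ``continuity and compactness bookkeeping.'' Second, your anticipated case split by the regions of \cref{fig:x0xfSplit} is not the relevant one: the paper splits according to the position of $(x^0,x^f)$ relative to the line $\mD$, and when the two points lie on opposite sides of $\mD$ the direct interpolation between the two one-switch sequences does not apply; the paper reduces this case to a same-side configuration using the symmetry of \cref{lem:doubleCone} (working between $Sx^f$ and $x^f$, or $Sx^0$ and $x^0$, depending on which point is closer to $\mD$). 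Without these ingredients your homotopy is a plan rather than a proof.

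Your alternative reachable-set route has a genuine gap. Feasibility of $T$ is $x^f\in\mathcal{R}(T)$ with $\mathcal{R}(T)\subset\mathbb{R}^2$, and the \emph{same} control must hit both coordinates of $x^f$ simultaneously. Monotonicity in $T$ of the extreme reachable values of one coordinate only shows that the set of $T$ for which that coordinate of $x^f$ is reachable is an interval; it does not control the joint condition, because the two scalar dynamics in \cref{eq:x1x2Dyn} are driven by the same input. That no generic convexity-plus-continuity ``sweeping'' argument can work is demonstrated by \cref{lem:lemmaConvexUnBounded}: for $(x^0,x^f)\in\mJ$ the target leaves and re-enters the reachable set as $T$ grows, and $\mT$ is genuinely non-convex. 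Any correct proof of the bounded case must therefore exploit the specific phase-plane geometry of those configurations, which is exactly what the paper's constructive argument does.
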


\begin{proof}
\Cref{lem:lemmaBoundedness} shows that for $\mT$ to be bounded either $x^0 \in \mathbb{R}^2\backslash\region{258}$, or $(x^0,x^f) \in \region{2}\times\region{123}$, or $(x^0,x^f) \in \region{8}\times\region{789}$.

As we only consider the case of bounded $\mT$ in this section, \cref{lem:minMaxSolGlobalAndExistance} shows the existence of solutions of~\cref{eq:minTime}. In general,
there are at most two bang-bang sequences with one switch between $x^0$ and $x^f$ which are noted $\seq_{m}\triangleq(u_m,a_m,b_M)$ and $\seq_{M}\triangleq(u_M,a_M,b_m)$.
Further, according to~\cref{lem:minMaxTimeSol}, the two controls $\um$ and $\uM$ are bang-bang with at most one switch. Hence, either $\um = \seq_m$ and $\uM = \seq_M$, or the other way around.

By definition, $\mT\subset [\Tm, \TM]$. When $\Tm=\TM$, $\mT$ is a singleton, hence is trivially convex. We now assume $T_{\min}<T_{\max}$.
The rest of the proof depends on the location of $(x^0, x^f)$ relative to $\mD$.
\begin{figure}[thpb]
   \centering
   \includegraphics[width=.8\linewidth]{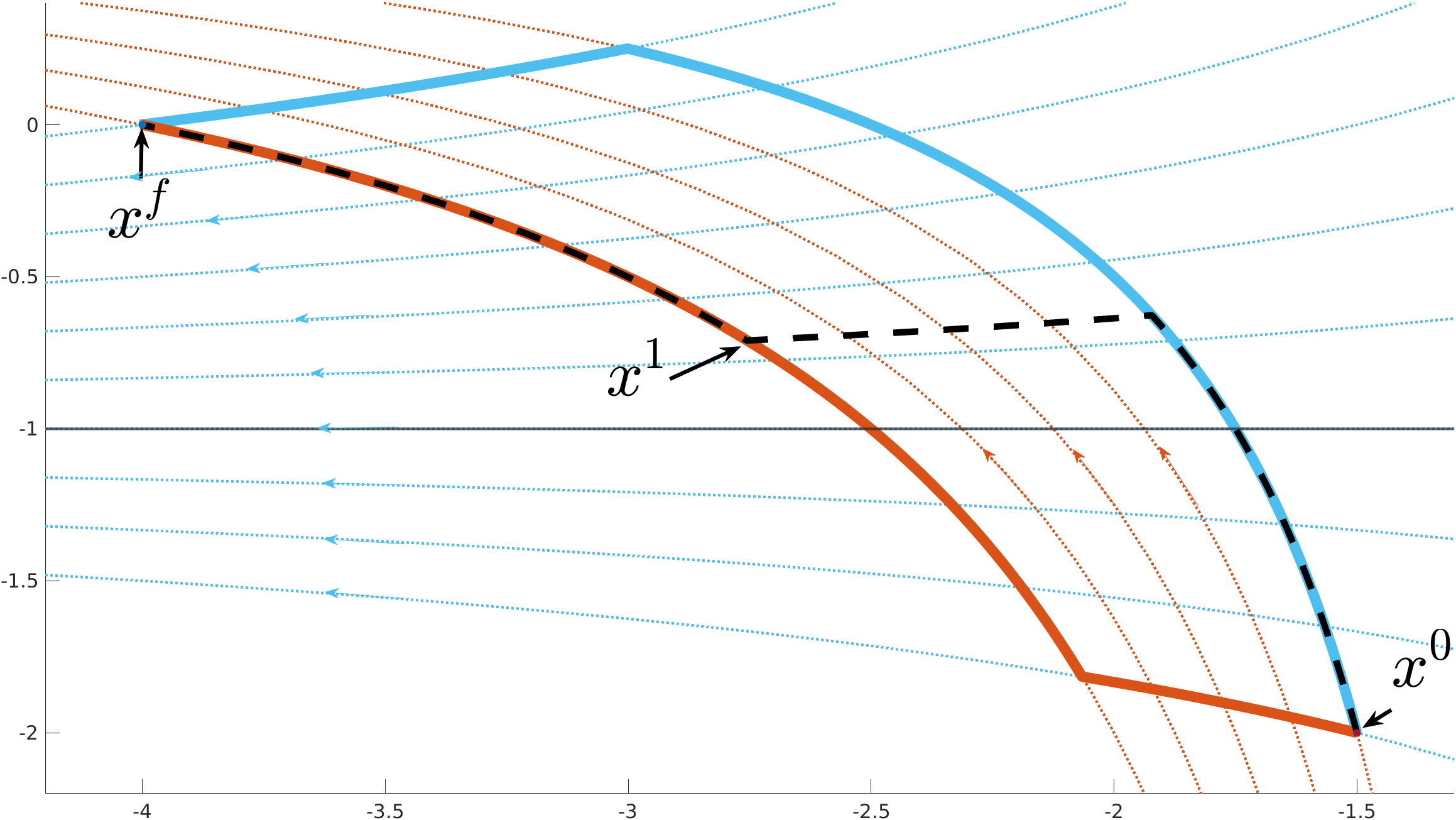}
   \caption{Phase diagram for~\cref{eq:x1x2Dyn} with $u=u_m$ (dotted red), $u=u_M$ (dotted blue),
   minimum time trajectory $x^{\min}$ (solid blue line), maximum time trajectory $x^{\max}$ (solid red line),
   and $x^{\seq_v}$ for some $a\in[0,a_m]$ (dashed black line).}
   \label{fig:figA4A7}
\end{figure}
\subsubsection{\texorpdfstring{$(x^0, x^f) \in{\mD^-}^2$}{(x0,xf) in D-} (on the same side)}\label{caseD--}
\cref{fig:figA4A7} shows the trajectory steering the system from $x^0$ to $x^f$ using $\seq_m$ and $\seq_M$.
The trajectories do not cross each other. The trajectory corresponding to $\seq_m$ and the reverse trajectory corresponding to $\seq_M$ define a positively oriented closed curve. 
The region $\region{147}\cap\mD^-$ being positively invariant, the curve is strictly included in ${\mD^-}^2$.

For all $x\in\mD^-$, 
\begin{equation*}
      \left<\begin{pmatrix}0 & -1\\1 &0\end{pmatrix}(Ax+Bu_m),\ Ax+Bu_M\right>\ > 0
\end{equation*}
Therefore, for all $a\in[0, a_m]$, there exists $b\geq 0,\ c\geq 0$ s.t.
\begin{equation*}
   \seq_v (a,b,c) \triangleq (u_m,a,b,c) \in \Omega(x^0, x^f, a+b+c)
\end{equation*}
The solution $x^{\seq_v}$ is shown in~\cref{fig:figA4A7}.
By definition, the solution $x^{\seq_v}$ is continuous, hence
\begin{equation*}
   x_1 \triangleq \phi(x, \seq_v (a,b,c), a+b) = \phi(x^f, \seq_v (a,b,c), -c)
\end{equation*}

We define the function $
   \fT :  
      [0,a_m] \ni a \mapsto \fT(a) = a+b+c \in [T_{\min}, T_{\max}]
       $
which maps the duration $a$ of the first arc of $x^{\seq_v}$ to the total duration $a+b+c$. Define $g$ as
$
   g(a,b,c) = \phi(x, \seq_v(a,b,c), a+b) - \phi(x^f, \seq_v(a,b,c), -c)
$. From  $u_m\neq u_M$, one has
\begin{equation*}
   \begin{aligned}
      \rank(&\begin{bmatrix}
         \frac{\partial g}{\partial b}, \frac{\partial g}{\partial c}
      \end{bmatrix})
      = \rank(\begin{bmatrix}
         Ax^1 + Bu_M & -Ax^1-Bu_m
      \end{bmatrix})   \\
     & \phantom{eeeee}= \rank(\begin{bmatrix}
         Ax^1& B
      \end{bmatrix})
       = \rank(\begin{pmatrix}
         x^1_1 & 1\\
         -x^1_2 & 1
      \end{pmatrix}) = 2
   \end{aligned}
\end{equation*}\label{fullrank}
The intermediate point $x^1$ is defined by $g(a,b,c) = 0$. The full rank property above associated to the injectivity of the function $(a,b,c)\mapsto\begin{pmatrix}
   a & g(a,b,c)
\end{pmatrix}^T$ gives, through the global inversion theorem~\cite[Theorem 6.2.3]{implicitFTH}, the existence of $\psi\in\mC^0$ s.t., over the domain of definition $[0,a_{\min}]$, $(b,c) = \psi(a)$.

Thus, the function $\fT$ is continuous. Therefore, by the intermediate value theorem, $[\Tm, \TM]\subset \fT([0, a_m]) \subset \mT$, which concludes the proof.

\subsubsection{\texorpdfstring{$(x^0, x^f) \in{\mD^+}^2$}{(x0,xf) in D+} (on the same side)}
The proof is identical, replacing $\mD^-$ by $\mD^+$, the trajectory corresponding to $\seq_m$ and the reverse trajectory corresponding to $\seq_M$ defining a negatively oriented closed curve.
\begin{figure}[thpb]
   \centering
   \includegraphics[width=0.72\linewidth]{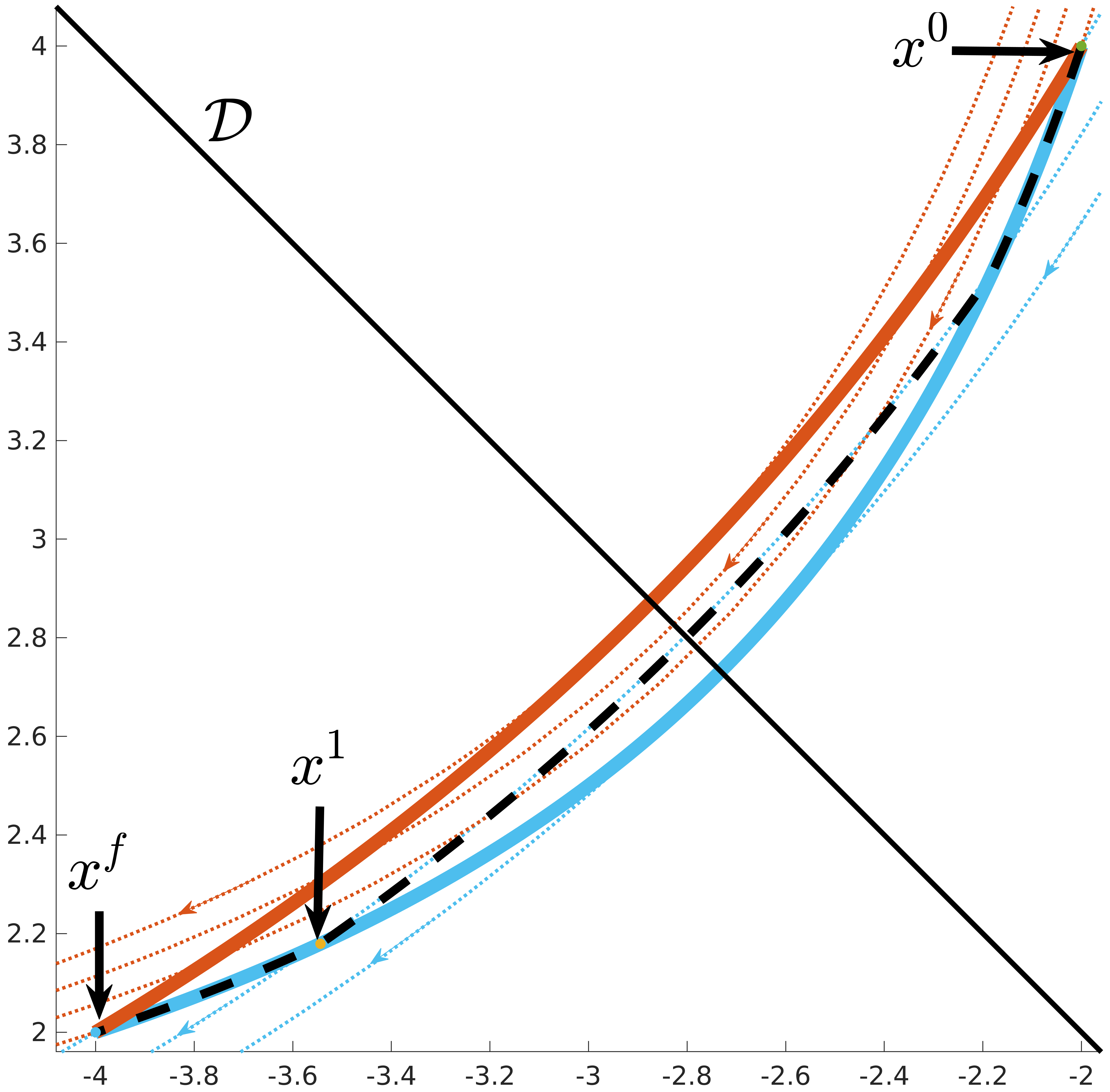}
   \caption{Phase diagram for~\cref{eq:x1x2Dyn} with $u=u_m$ (dotted red), $u=u_M$ (dotted blue),
   minimum time trajectory $x^{\min}$ (solid red line), maximum time trajectory $x^{\max}$ (solid blue line),
   and $x^{\seq_\mathcal{B}}$ for some $a\in[0,\frac{t_0}{2}]$ (dashed black line).}
   \label{figA1}
\end{figure}
\subsubsection{\texorpdfstring{$(x^0, x^f) \in \mD^+\times\mD^-$}{(x0,xf) in D+XD-} (on opposite sides)}\label{caseD+D-}
According to~\cref{fig:x0xfSplit}, $x^0\in\region{1}\cap\mD^+$ and $x^f\in\left\{\region{1}\cap\mD^-\right\}\cup\region{4}$.

If the (Euclidean) distance $d(x^f,\mD)$ between $x^f$ and $\mD$ is strictly lower than the distance $d(x^0,\mD)$ between $x^0$ and $\mD$, then~\cref{lem:doubleCone} states that $\phi(x^f,u_M,-t_0) = Sx^f = \phi(x^f,u_m,-t_1)$, for some $t_0,t_1 \geq 0$. We use the same constructive proof between $Sx^f$ and $x^f$ with the sequence $\seq_\mathcal{B} \triangleq (u_m, a, b, c)$, with $a\in[0,\frac{t_0}{2}]$.

If $d(x^f,\mD) > d(x^0,\mD)$, then~\cref{lem:doubleCone} states that $\phi(x^0,u_M,t_0) = Sx^0 = \phi(x^0,u_m,t_1)$, for some $t_0,t_1 \geq 0$. We use the same constructive proof between $Sx^0$ and $x^0$ with $\seq_\mathcal{B}$.

If $d(x^f,\mD) = d(x^0,\mD)$, the proof directly follows from $x^0=Sx^f$ to $x^f$, this situation is illustrated in~\cref{figA1}.
\subsubsection{\texorpdfstring{$(x^0, x^f) \in \mD^-\times\mD^+$}{(x0,xf) in D-XD+}  (on opposite sides)}
The case is identical to the previous case, $x^0$ belonging to $\region{9}\cap\mD^-$ and $x^f$ belonging to $\left\{\region{9}\cap\mD^+\right\}\cup\region{6}$.

This completes the proof.

\end{proof}
\subsection{Convexity of unbounded \texorpdfstring{$\mT$}{T} cases}\label{sec:convexUnBounded}
Let us define $\mJ$ a subset of $\mathbb{R}^4$ as follows
\begin{equation*}
   \mJ \triangleq  \left\{
      \begin{aligned}
      &x^0\in\region{258},\ x^f\in\region{456}\text{ \text{s.t.}} \\
      &\exists x_\mD, x_d \in \mD\cap\region{5},\ t_\mD> 0 ,\ t_d > 0, \\
      &\left\{\begin{aligned}
         &{x_d}_1 < {x_\mD}_1\\
         &x^f=\phi(x_\mD, u_m, t_\mD)\\
         &x^0=\phi(x_d, u_M, -t_d)
      \end{aligned}\right.
      \text{ or }
      \left\{\begin{aligned}
         &{x_d}_1 > {x_\mD}_1\\
         &x^f=\phi(x_\mD, u_M, t_\mD)\\
         &x^0=\phi(x_d, u_m, -t_d)
      \end{aligned}\right.
   \end{aligned}
    \right.
\end{equation*}
The set $\mJ$ is partially pictured in~\cref{fig:stateSpaceSplit} (all possible values of $x^0$ are colored in red when $x^f$ varies along the dashed line). It plays a particular role in~\cref{lem:lemmaConvexUnBounded} as it is the only one where boundary conditions yield a non-convex set $\mT$.

\begin{lemma} \label{lem:lemmaConvexUnBounded}
When $\mT$ is unbounded,
if $(x^0,x^f)\notin\mJ$,  then $\mT = [\Tm, +\infty[$, otherwise, there exists $A<B$ s.t. $\mT = [\Tm, A]\cup[B, +\infty[$.
\end{lemma}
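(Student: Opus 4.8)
The plan is to split the analysis according to whether or not $(x^0,x^f)\in\mJ$, mirroring the structure already used for the bounded case, and to exploit \cref{lem:doubleCone} to reduce the unbounded geometry to a controllable "core" region around $\region{5}\cap\mD$.

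First I would establish that, under the hypotheses of \cref{lem:lemmaBoundedness} guaranteeing unboundedness, every feasible transient can be routed through a waypoint $x_w\in\region{5}$. Indeed, from the boundedness proof we know $x^0$ can reach $\region{5}$ (either $x^0\in\region{5}$ already, or $x^0\in\region{258}$ can be steered there), and symmetrically $x^f$ is reachable from a point of $\region{5}$. The key observation is that, once the trajectory is in $\region{5}$, \cref{lem:doubleCone} lets us insert arbitrarily many "loops": a segment under $u_m$ followed by a segment under $u_M$ returning (via the symmetry $S$) to a point on $\mD\cap\region{5}$, each loop adding a positive, continuously tunable amount of time, with the minimal loop duration going to $0$. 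This shows that for large $T$ the set $\mT$ is a half-line $[B,+\infty[$ for some $B>0$, and that the values of $T$ obtainable by "small detours near $\region{5}$" form a full interval with no gaps above $B$. So the only question is the behaviour for \emph{small} feasible $T$, i.e.\ whether the minimal-time trajectory $\um$ (bang-bang, one switch, from \cref{lem:minMaxTimeSol}) can be continuously deformed into the looping trajectories, or whether a gap appears.

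Second, for $(x^0,x^f)\notin\mJ$ I would run the same homotopy/continuity argument as in \cref{sec:convexBounded}: parametrise the candidate controls by the duration $a$ of the first bang, apply the full-rank computation and the global inversion theorem exactly as in \cref{caseD--} to get a continuous time map $\fT$ whose image is an interval, and then check that this interval connects $\Tm$ to the half-line $[B,+\infty[$ produced above — concluding $\mT=[\Tm,+\infty[$ by the intermediate value theorem. The point is that when $(x^0,x^f)\notin\mJ$ the bang-bang minimal trajectory and the first looping trajectory lie on the same side of the relevant separatrices (the positively invariant sets $\region{147}$, $\region{369}$ and the cone geometry do not obstruct the deformation), so the full-rank Jacobian condition on $g$ holds along the whole family and $\psi\in\mC^0$ exists on the full parameter interval.

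Third, for $(x^0,x^f)\in\mJ$ I would show the gap is genuine. By definition of $\mJ$, $x^f=\phi(x_\mD,u_m,t_\mD)$ and $x^0=\phi(x_d,u_M,-t_d)$ with $x_\mD,x_d\in\mD\cap\region{5}$ and ${x_d}_1<{x_\mD}_1$ (or the symmetric configuration). Here the \emph{only} way to connect $x^0$ to $x^f$ with few switches uses the "short" route $x^0\to x_d\to\dots\to x_\mD\to x^f$, giving $T_{\min}$ and, by monotonicity of a suitable coordinate along the two terminal bang arcs, a whole interval $[\Tm,A]$ of short durations; but to insert a loop one must first overshoot $x_\mD$ and come back, which — because ${x_d}_1<{x_\mD}_1$ forces the intermediate $\mD$-crossing point to move in the "wrong" direction — costs a strictly positive minimum extra time, producing $B>A$. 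I would make this quantitative by bounding below the time of any trajectory from $x^0$ to $x^f$ that crosses $\mD$ at a point strictly beyond $x_\mD$ (or uses more than the minimal number of switches), using \cref{eq:x1x2Sol} and the explicit loop-time function $f$ of \cref{lem:doubleCone}; the continuity and monotonicity of these time functions give that $\{$feasible $T\}$ omits the open interval $(A,B)$ while containing $[\Tm,A]$ and $[B,+\infty[$. Combined with \cref{lem:lemmaConvexBounded}, this yields exactly the four possibilities in \cref{mainT}.

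The main obstacle I expect is the $(x^0,x^f)\in\mJ$ case: proving that the gap $(A,B)$ is truly empty of feasible times requires a sharp lower bound on the duration of \emph{every} admissible control steering $x^0$ to $x^f$ (not just bang-bang ones with one or two switches), and showing that $A$ (the sup of the lower interval) and $B$ (the inf of the upper half-line) are attained by the natural bang-bang families and satisfy $A<B$ strictly. This is where the precise geometry of $\mJ$ — the inequality ${x_d}_1<{x_\mD}_1$ and the fact that both $x_d,x_\mD$ sit on $\mD\cap\region{5}$ — must be used carefully, likely via a PMP argument (as in \cref{lem:minMaxTimeSol}) to restrict attention to bang-bang controls with a bounded number of switches, followed by an explicit optimisation over switching times.
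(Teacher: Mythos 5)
Your plan follows essentially the same route as the paper: split on whether $(x^0,x^f)\in\mJ$; when $(x^0,x^f)\notin\mJ$, combine the continuity/homotopy argument of \cref{caseD--} with arbitrarily small, continuously tunable loops near $\mD\cap\region{5}$ to conclude $\mT=[\Tm,+\infty[$; and when $(x^0,x^f)\in\mJ$, show that any trajectory deviating from the two one-switch paths must pay a strictly positive detour cost quantified by the loop-time function $f$ of \cref{lem:doubleCone}, which creates the gap $(A,B)$. The only (minor) difference is that the paper obtains that lower bound by a geometric exit-point argument on the region bounded by $\Gamma_1\cup\Gamma_2$ rather than your suggested PMP reduction, and it is exactly as terse at the point you honestly flag as the main obstacle.
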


\begin{proof}
Following~\cref{lem:lemmaBoundedness}, a careful investigation of the graph in~\cref{fig:x0xfSplit} reveals that for $\mT$ to be unbounded we have $(x^0, x^f)\in \region{258}\times\region{456}$. By symmetry of the vector field (rotation of $\pi$ about $(\frac{u_m+u_M}{2}, -\frac{u_m+u_M}{2})^T$), we now only consider a pair $(x^0,x^f)\in \left(\region{258}\cap\mD^+\right)\times\region{456}$.

\subsubsection{\texorpdfstring{$(x^0, x^f) \notin \mJ$}{(x0, xf) not in J}}
In all such cases, there exists a sequence $(u_m, a,b,c,d)$, with $a\geq 0, b>0, c>0, d\geq 0$ steering $x^0$ to $x^f$ with a single intersection with $\mD\cap\region{5}$. This sequence can be easily extended in the vicinity of $\mD\cap\region{5}$ (which excludes equilibria) to increase the transient time by any desired arbitrarily small increment $\epsilon>0$. Iteratively, this construction allows to infinitely increase the transient time by a continuous constructive process.

Also, the same type of sequence with other values for $a,b,c,d$ can generate a smooth collection of trajectories approaching the minimum time trajectory. The proof of~\cref{caseD--} yields the conclusion with the continuous mapping $\fT: (a,b,c,d)\mapsto a+b+c+d$.

\subsubsection{\texorpdfstring{$(x^0, x^f) \in \mJ$}{(x0, xf) in J}}
There exist two sequences $\seq_1 = (u_m,a_1,b_1)$ (e.g. corresponding to the minimum time $\Tm$) and $\seq_2=(u_M,a_2,b_2)$ (with time $T_2$) steering $x^0$ to $x^f$ by two paths $\Gamma_1$ and $\Gamma_2$ entirely in $\mD^+$. They are illustrated in~\cref{fig:holeFig}.

$\seq=(u_m,a,b,c)$ gives, by the continuity of $(a,b,c)\mapsto a+b+c$, that all feasible trajectories staying inside $\Gamma_1\cup\Gamma_2$ have a transient time in $[\Tm, T_2]$, for $T_2<\infty$.

Now, consider a trajectory from $x^0$ to $x^f$ leaving $\Gamma_1\cup\Gamma_2$. A detailed investigation of the phase portrait gives that this trajectory leaves $\Gamma_1\cup\Gamma_2$ at a point $x^{ii} \triangleq \phi(x^f, u_m, -t), 0<t\leq b_2$, strictly in $\region{5}$, with control $u>u_m$.

Hence, it exists
$ \epsilon > 0$ s.t. $x^{ii+} \triangleq \phi(x^{ii}, u>u_m, \epsilon)\in \mD^+\cap\region{5}.
$
From $x^{ii+}$ to $x^f$ the minimum time trajectory has a minimum time $T_{mini}$ and passes through $x^i\triangleq\phi(x^0, u_M, a_2)$, and 
\begin{equation*}
   \begin{aligned}
      &T_{mini}(x^{ii+}, x^f) = \\
      &\ T_{mini}(x^{ii+}, x^i) + T_{mini}(x^i, x^{ii}) + T_{mini}(x^{ii}, x^f)\\
      &\ \geq f(x^i, u_M) + f(Sx^i, u_m) + o(\epsilon) + T_{mini}(x^{ii}, x^f)
   \end{aligned}
\end{equation*}

By imposing $\epsilon \rightarrow 0$, we deduce that any such trajectory has a transient time larger than $T_3 = \min_{x^{ii}} T_{mini}(x^{ii},x^f) + T_0$, with $T_0 = f(x^i, u_M) + f(Sx^i, u_m)$. The transient time $T_3$ is given for a certain $x^{iii}\in\region{5}$. Hence, there exists a trajectory going through $x^0\rightarrow x^{iii} \rightarrow Sx^{iii} \rightarrow x^f$ with a transient time $T_3 + T_{mini}(x^0,x^{iii}) = T_4$, with a sequence $\seq=(u_m, a, b, c)$. By continuity, there exists $t \in [a, a+b]$ such that $\phi(x^0, \seq, t)\in\mD$.

This completes the proof.
\begin{figure}[thpb]
   \centering
   \includegraphics[width=.8\linewidth]{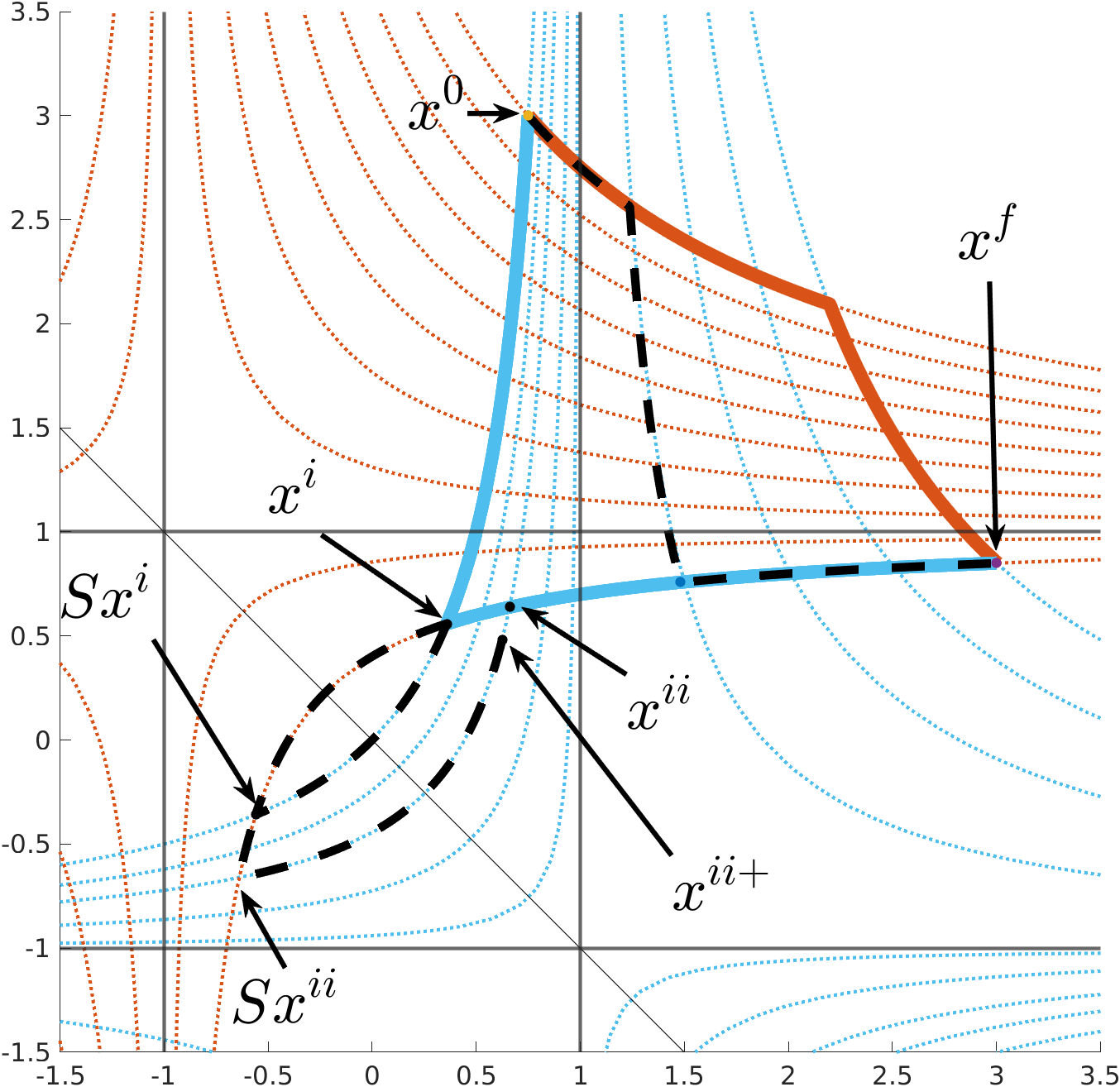}
   \caption{Phase diagram for~\cref{eq:x1x2Dyn} with $u=u_m$ (dotted red), $u=u_M$ (dotted blue),
   minimum time trajectory $x^{\min}$ (solid red line), $T_2$ time trajectory $x^{\seq_2}$ (solid blue line),
   and  points of interest for the proof of \cref{lem:lemmaConvexUnBounded}.}
   \label{fig:holeFig}
\end{figure}
\end{proof}
\section{Numerical method}\label{sec:sectionV}
\cref{mainT} describes $\mT$. In practice,~\cref{pbT,pbu} have to be considered in the two dimensions $x-y$ of~\cref{LIPmodel}. 
The two problems share a single parameter $T$. It has to belong to the two sets $\mT^x$ and $\mT^y$. This does not change the possible nature of $\mT=\mT^x\cap\mT^y$. We notice, numerically, that $\mT$ is a single interval, which enables us to use bisection to solve~\cref{pbT}. A side product is the resolution of~\cref{pbu}.

\subsection{QP resolution and feasibility check}
\Cref{pbu} defines a fixed-time OCP that can be addressed using a direct numerical method. Conveniently, the input signal is represented by a piece-wise $\mC^1$ function in between non-uniform nodes. The dynamics and the value of the integral cost are exactly represented using the first-order hold quadratures. This allows expressing boundary conditions and input constraints under an affine form in a finite number of variables, and the cost as a quadratic function of these variables. The same discretization procedure is employed in the $x-y$ directions, resulting in a QP with $2P$ variables and $4P + 4$ affine constraints. The outcome of the QP resolution is a feasibility boolean, and, when it is feasible, a solution to~\cref{pbu}.

\subsection{Bisection resolution on the feasibility}
We notice, numerically, that the nature of $\mT$ is a single interval. On~\cref{fig1}, each vertical slice of the $mT$ green area is a segment. This enables us to use bisection to solve~\cref{pbT}. 

Given an initial guess $T^0\in \mT$, we solve~\cref{pbT} using bisection on the feasibility function above  (treated as a boolean) between the target time $T^t$ and the initial guess $T^0$. 
Classically, the search interval is reduced by a factor $2^N$, where $N$ is the maximum number of iterations (typically 10).
Recursively, for the next time step, the guess is easily updated using the outcome of the previous run.

\section{Simulation results}\label{sec:sectionVI}
\subsection{Results for highly varying patient efforts}\label{optEvaluation}
A single-step gait is extracted from an available cyclic walk trajectory. To simulate the behavior of a highly demanding patient,  a strongly oscillating velocity along the geometric path is considered. The nominal velocity is $1$ and the variations are $\pm 50\%$. This defines a signal $t\mapsto T^t(t)$.
For reference, an exhaustive search algorithm is employed to determine at each step the feasible set $\mT$. As is visible in~\cref{fig1}, the resolution of~\cref{pbT} leaves the patient-chosen velocity unchanged at the beginning of the simulation. Gradually the feasible set gets more stringent and at some point, near $t=0.4$\,\textrm{s}, the proposed algorithm has to intervene. The desired time $T^t$ is no longer feasible on many occasions. The situation worsens until the end of the simulation. Notably, at the end, the walk has to be sped up significantly.
\begin{figure}[thpb]
   \centering
   \includegraphics[scale=0.16]{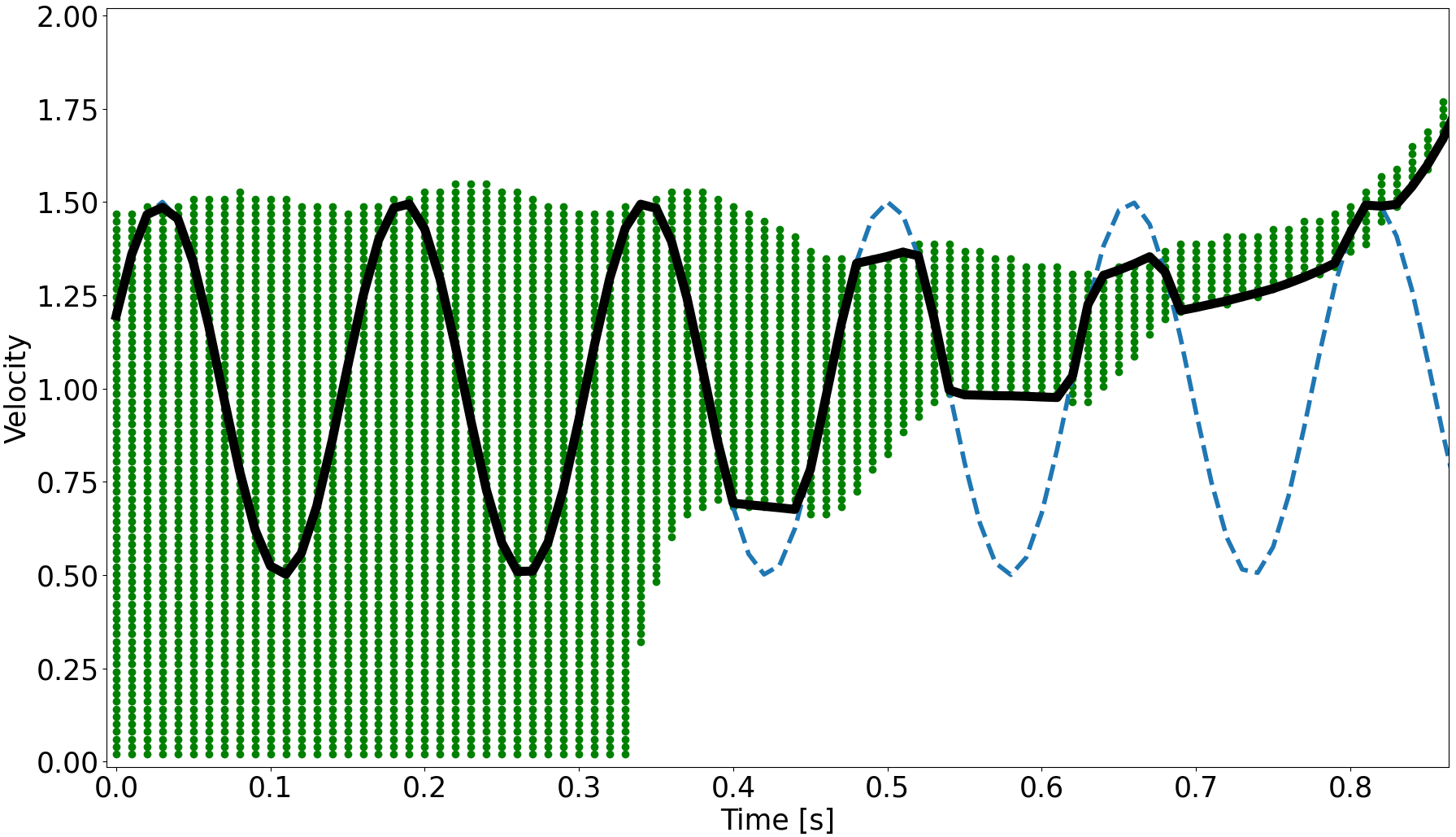}
   \caption{Velocity of the trajectory (1 is the nominal velocity). (Dotted blue): request from the patient. (Green dots): set $\mT$ determined by an exhaustive search, for reference. (Solid black): solution of the proposed methodology}
   \label{fig1}
\end{figure}
\subsection{Results on full-body simulations}
We perform extensive closed-loop rigid-body simulations of the patient-exoskeleton system to evaluate the safety increase offered by  our algorithm. To simulate the behavior of the patient, we consider piecewise velocity signals consisting of a square wave whose duration and magnitude are varied.    \cref{fig2} reports the results (for each duration magnitude, a vast list of possible starting times for the square disturbance is considered, and we report the success rate). A naive replanning methodology is used as a benchmark reference to illustrate the increased performance of our algorithm. It consists of a simple (and natural) time rescaling of the nominal articular trajectory using the simulated user velocity. \cref{fig2}~(left) reports simulation results obtained with the naive time rescaling methodology. \cref{fig2}~(right) reports the results obtained with our methodology.
A simulation is considered stable if the simulated patient-exoskeleton system walks for at least $10$\,\textrm{s} without falling.
In both cases, we use a state-of-the-art admittance-based DCM controller~\cite{stairClimbing} to stabilize around the reference CoM trajectory.
\begin{figure}[thpb]
   \centering
   \includegraphics[scale=0.23]{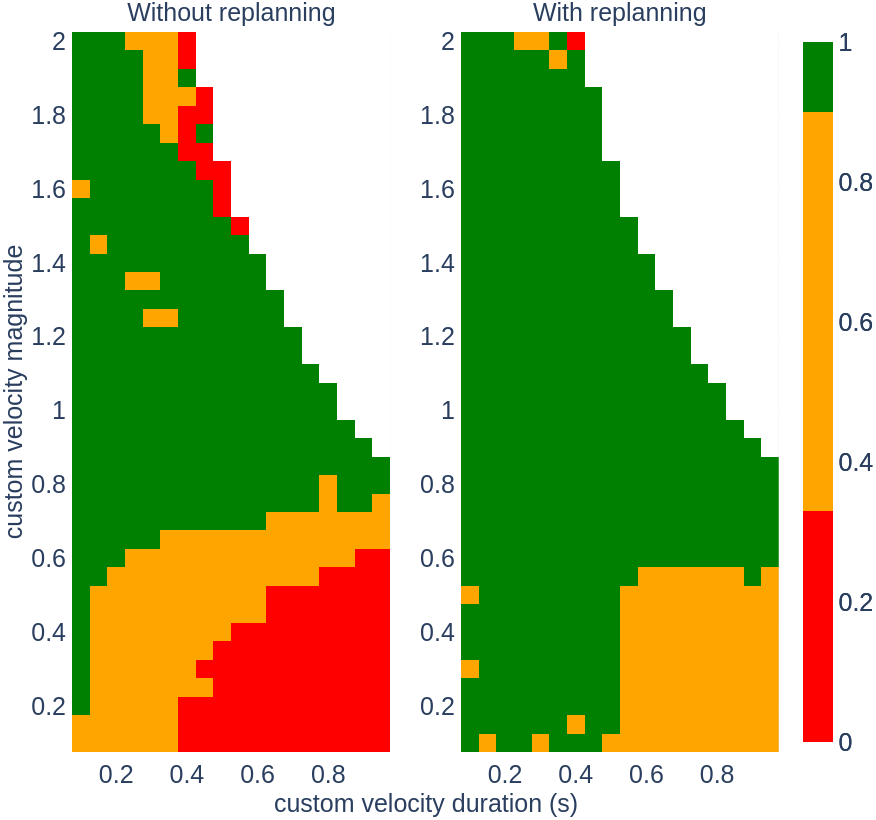}
   \caption[]{Comparison of rate of success heatmaps for velocity variations having various durations and magnitudes\protect\refstepcounter{footnote}\protect\footnotemark[\thefootnote]. Left: naive scaling. Right: with proposed replanning.}
   \label{fig2}
\end{figure}
\footnotetext[\thefootnote]{The white spaces in this figure corresponds to unfeasible values of the parameters violating the constraint that the scaled phase variable must remain smaller than 1.}

These results show the substantial improvement of the patient-exoskeleton system balance provided by the use of our algorithm, the stability being ensured for almost all considered cases, except for some very low-velocity cases with long durations (a careful examination of simulations reveals that fall occurs mostly when slow takes place at late stages of the step). A total of 2917 simulations have been conducted. In summary, less than $8\%$ of cases are failing our algorithm, while more than $30\%$ were without it.

\subsection{Computational load}
In view of applications, we will need to implement this algorithm at $1$\,\textrm{kHz}. Typical numerical setups considers $P=4$. The employed software is a streamlined implementation of the positive definite  QP dual algorithm from~\cite{goldfarb}  specifically coded in {\tt C} for this application to minimize any overheads.
The problem is treated as dense.  Typical CPU times reported in~\cref{cpuTime} are lower than the 10\,ms reported in~\cite{c1} and the 100\,ms reported in~\cite{c5} where similar online planning problems are addressed. They are also lower or equal to those reported in~\cite{c2,c7} where fixed-time online planning problems are solved. They are consistent with this objective and the hardware specifications of Atalante.
\begin{figure}[thpb]
   \begin{center}
      \begin{tabular}{ |c|c|c|c| } 
         \hline
         & min & max & mean \\ 
         \hline
         CPU time & 0.07\,\textrm{ms} & 0.75\,\textrm{ms} & 0.2\,\textrm{ms} \\
         \hline
      \end{tabular}
   \end{center}
   \caption{CPU time for the proposed algorithm (with $N=10$ maximum number of iterations , $4P+4=20$ variables, on a Ryzen 7 $1.7$\,\textrm{GHz} without turbo boost).}
   \label{cpuTime}
\end{figure}
\section{Conclusion}
In this paper, we presented a fast replanning algorithm for an exoskeleton with a patient. The method is applicable to general bipedal robots undergoing high-frequency velocity changes. Extensive numerical evaluation stresses its effectiveness and the safety increase. Thanks to our algorithm, the fall rate drops from $30\%$ (when using naive time scaling) to only $8\%$. Finally, we discussed the implementability of our algorithm on the Atalante onboard computer, providing evidence that the performance of our algorithm should be enough to run in real-time.

Future work will include implementation and experimental validation on Atalante. We expect some degradation of the stability because of model discrepancies, especially due to uncertainties in the patient model. Hence, we will also work on closing the gap between the simulation and experimental results.

\end{document}